\font\tencyr=wncyr10
\font\sevencyr=wncyr7
\font\tencyr=wncyr10
\font\tencyri=wncyi10
\font\eightcyr=wncyr8
\newtheorem{Def}{Definition}
\newtheorem{Prop}{Proposition}
\newtheorem{theo+}           {Theorem}
\newtheorem{prop+}           {Proposition}
\newtheorem{coro+}           {Corollary}
\newtheorem{lemm+}           {Lemma}
\theoremstyle{definition}
\newtheorem{defi+}           {Definition}
\newtheorem{example}           {Example}
\theoremstyle{remark}
\newtheorem{rema+}           {Remark}
\newenvironment{theorem}{\begin{theo+}}{\end{theo+}}
\newenvironment{proposition}{\begin{prop+}}{\end{prop+}}
\newenvironment{corollary}{\begin{coro+}}{\end{coro+}}
\newenvironment{lemma}{\begin{lemm+}}{\end{lemm+}}
 \def\Int{\mathbb{Z}}
\newcommand {\CC}{\mathcal C}
\newcommand {\bCP} {\mathbb {CP}}
\newcommand{\bC}{\mathbb C}
\newcommand{\bR}{\mathbb R}
\newcommand{\C}{\mathcal C}
\def\CP{{{\mathbb C}{\rm P}}}
\def\d{{\partial}}
\def\db{{\bar{\partial}}}
\def\newop#1{\expandafter\def\csname #1\endcsname{\mathop{\rm
#1}\nolimits}}
\begin{document}
          \numberwithin{equation}{section}

          \title[Level functions of quadratic  differentials  and the Strebel property]
          {Level functions of quadratic  differentials,  signed measures, and the Strebel property}
          
          \author[Y.~Baryshnikov]{Yuliy Baryshnikov}
\address{
Department of Mathematics, University of Illinois, Urbana, IL 61801, USA}
\email{ymb@illinois.edu}

\author[B.~Shapiro]{Boris Shapiro}
\address{Department of Mathematics, Stockholm University, SE-106 91
Stockholm,
         Sweden}
\email{shapiro@math.su.se}
\thanks{$^\dagger$Professor  
Nikolaus Braschmann ({\eightcyr Nikola}\!\!{\u {\eightcyr i}}
{\eightcyr  Dmitrievich Brashman}) was born in 1796 in a Jewish
merchant family in Neu Raussnitz of Austrian Empire (at present
Rousínov in Czech Republic). 
 Seldom remembered, he, as many other foreign scientists who worked in
 Russia in the 18th and the 19th centuries,  has substantially
 contributed to the progress of  Russian science and, in his
 particular case, to the development of   Moscow Imperial university
 (at present Moscow State university) and to the foundation of Moscow
 mathematical society as well as  of the Russian  journal
 Matematicheskii Sbornik. Judging from his portrait,  he has been
 decorated by the medal of St. Prince Vladimir ({\eightcyr orden
   svyatogo knyazya Vladimira, ``Vladimir na shee"}) which has been
 awarded for exceptional achievements in a military or a civil
 service.}

\date{\today}
\keywords{quadratic differentials,  Strebel differentials, non-chaotic differentials, signed measures}
\subjclass[2010]{Primary 30F30, Secondary 31A05}

\begin{abstract}   In this paper, motivated by the classical notion of
  a Strebel quadratic differential on a compact Riemann surface
  without boundary, we introduce several classes of  quadratic
  differentials (called non-chaotic, gradient, and positive gradient)
  which possess some properties of Strebel differentials and appear in
  applications. We discuss the relation between gradient
  differentials and special signed measures supported on their set of
  critical trajectories. We provide a characterisation of  gradient
  differentials for which there exists a positive  measure  in the
  latter class.
\end{abstract}
\dedicatory{\tencyri Professoru N.~D.~Brashmanu,$^\dagger$ osnovatelyu
  Matematicheskogo sbornika i uchitelyu P.L.~Chebysheva,
  posvyashchaet{}sya}

\maketitle

\section {Introduction}

Theory of quadratic differentials was pioneered in the late 1930's by
O.~Teichm\"uller as a useful tool to study conformal and
quasi-conformal maps.  Since then it has been substantially extended
and found numerous applications. (For the general information about
quadratic differentials consult \cite{Je, Stahl, Str}.) One important
class of quadratic differentials with especially nice properties was
introduced by J.~A.~Jenkins and K.~Strebel in the 50's; these
differentials are  called \emph{Strebel} or \emph{Jenkins-Strebel},
see \cite{Gar, Je,Str} and \S~\ref{sec:non-chaotic} below.   

In applications to potential theory, asymptotics of orthogonal
polynomials, WKB-methods in spectral theory of Schr\"odinger equations
in the complex domain one often encounters  quadratic differentials
which are non-Strebel, but rather share some of their properties, see
e.g. \cite{Ba, Fe, MFR1, MFR2, Sh, STT} and references therein. A good
example of such non-Strebel differentials having many important
properties  is provided by polynomial quadratic differentials in
$\bC$, such that each zero is the endpoint of some critical trajectory.

Motivated by the above examples, we present below several natural
classes of differentials containing the class of Strebel differentials
and possessing certain nice properties. The most general  class we
introduce is called \emph{non-chaotic} and it is characterized by the
property that the closure of any horizontal trajectory of such
differential is nowhere dense. Further we introduce a natural subclass of
non-chaotic differentials which we call \emph{gradient} and which is
characterised by the property that at its smoothness points it is
equal to $\pm \Re \sqrt{ \Psi}$. Finally, we discuss the appropriate
notion of positivity for gradient differentials. 

\begin{figure}
\begin{center}
\includegraphics[scale=2.5]{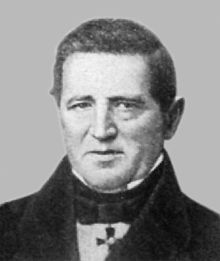}
\end{center}

\vskip 1cm

\caption{{\tencyr Nikola}\!\!{\u {\tencyr i}} {\tencyr Dmitrievich Brashman}.}
\label{Braschmann}

\end{figure}

\medskip
The structure of the paper is as follows. In \S~\ref{sec:quadr} we recall the basic facts about quadratic differentials and Strebel differentials.  
In \S~\ref{sec:non-chaotic}  we introduce and discuss a number of properties of non-chaotic differentials.  In \S~\ref{sec:gradient} we introduce and characterize gradient differentials.  In \S~\ref{sec:pos} we study positive gradient  differentials. 
Finally, in Appendix I 
 we recall our earlier motivating results   relating  considered classes of quadratic differentials to the Heine-Stieltjes theory, see \cite{STT}.

\medskip 
\noindent 
{\it Acknowledgements.}  The  second author wants  to acknowledge the hospitality of the Department of Mathematics of UIUC and the financial support of his visits to Urbana-Champaign under the program ``INSPIRE" without which this paper would never have seen the light of the day.

\section{Crash course on quadratic differentials} \label{sec:quadr}

\subsection{Basic notions}

The following definitions are  borrowed from  \cite {Je} and \cite {Str}.

\begin{defi+}
A (meromorphic) quadratic differential $\Psi$ on a compact  orientable Riemann surface $Y$ without boundary is
a (meromorphic) section of the tensor square
$(T^*_{\mathbb{C}}Y)^{\otimes2}$ of the holomorphic cotangent
bundle $T^*_{\mathbb{C}}Y$. The zeros and the poles of $\Psi$
constitute the set of \emph{critical points} of $\Psi$ denoted by
$Cr_{\Psi}$. (Non-critical points of $\Psi$ are  called {\em
regular}.) Zeros and simple poles are called {\em finite critical points} while poles of order at least $2$ are called {\em infinite critical points}. 
\end{defi+}

The next statement can be found in e.g.,  Lemma~3.2 of \cite{Je}.

\begin{lemma} \label{lm:euler} The Euler characteristic of $(T_\bC^*Y)^{\otimes 2}$ equals $2\chi(Y),$ where $\chi(Y)$ is the Euler characteristic of the underlying curve $Y$. Therefore, the difference between the number of poles and zeros (counted with multiplicity) of a meromorphic order $k$ differential $\Psi$ on $Y$ equals $2\chi(Y)$. In particular, the number of poles minus the number of zeros of any   quadratic differential $\Psi$ on $\bCP^1$ equals $4$. Such examples can be found in e.g. \cite{Fe}, Ch. 3. 
\end{lemma}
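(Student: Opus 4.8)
The plan is to reinterpret the assertion about the ``Euler characteristic'' of the line bundle $(T^*_\bC Y)^{\otimes 2}$ as a statement about its \emph{degree} (their ``Euler characteristic of the bundle'' being exactly the excess of poles over zeros, i.e. $-\deg$), and then to read off the pole--zero balance of $\Psi$ from the standard fact that, on a compact Riemann surface, the degree of a holomorphic line bundle is computed by the difference (number of zeros) minus (number of poles) of \emph{any} of its meromorphic sections, counted with multiplicity. Writing $K=T^*_\bC Y$ for the canonical bundle, a quadratic differential $\Psi$ is precisely a meromorphic section of $K^{\otimes 2}$, so everything reduces to computing $\deg K^{\otimes 2}$.

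First I would record the degree-counting principle. If $s_1,s_2$ are two meromorphic sections of a line bundle $L$, then $s_1/s_2$ is a global meromorphic function on $Y$, and the degree of its divisor is $0$ since on a compact Riemann surface a meromorphic function has equally many zeros and poles. Hence (zeros $-$ poles) is the same for every meromorphic section of $L$, and this common integer is $\deg L$. Next I would compute $\deg K$, the classical identity being $\deg K=2g-2$ where $g$ is the genus of $Y$; since $\chi(Y)=2-2g$ this reads $\deg K=-\chi(Y)$. I would obtain it by choosing a non-constant meromorphic function $f\colon Y\to\bCP^1$ and examining the $1$-form $df=f^*(dz)$, a section of $K$: the divisor of $f^*(dz)$ picks up the double pole of $dz$ at $\infty$ with multiplicity $\deg f$ together with the ramification divisor of $f$, so the Riemann--Hurwitz formula $\chi(Y)=2\deg f-\sum_p(e_p-1)$ converts the zero/pole count of $df$ into $\deg K=2g-2$. (Alternatively one simply quotes $\deg K=2g-2$ as a standard consequence of Riemann--Roch, or cites Lemma~3.2 of \cite{Je} directly.)

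Multiplicativity of degree under tensor powers then gives $\deg K^{\otimes 2}=2\deg K=2(2g-2)=-2\chi(Y)$. Applying the degree-counting principle to the section $\Psi$ of $K^{\otimes 2}$ yields (zeros $-$ poles) $=-2\chi(Y)$, that is, (poles $-$ zeros) $=2\chi(Y)$, which is the asserted difference. Specializing to $Y=\bCP^1$, where $g=0$ and $\chi(Y)=2$, gives poles minus zeros equal to $4$.

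The only genuinely nontrivial ingredient is the computation $\deg K=2g-2$; the remainder is bookkeeping with the vanishing degree of principal divisors and the additivity of degree under tensor products. I would expect the main obstacle, if any, to be purely one of sign and normalization conventions, namely matching the paper's ``Euler characteristic of the bundle $=2\chi(Y)$'' (an excess of poles over zeros) against the usual algebro-geometric convention in which $\deg K^{\otimes 2}=-2\chi(Y)$ counts zeros over poles.
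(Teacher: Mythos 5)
Your proposal is correct, but note that the paper itself gives no proof of this lemma at all: it is stated as a known fact with a pointer to Lemma~3.2 of \cite{Je}. So your argument supplies what the authors delegate to the literature, and it does so by the standard route: (a) the zero-minus-pole count of a meromorphic section is independent of the section, because the ratio of two sections is a global meromorphic function whose divisor has degree zero; (b) $\deg K = 2g-2$ via Riemann--Hurwitz applied to $df = f^*(dz)$, where indeed $\mathrm{div}(f^*dz) = f^*\mathrm{div}(dz) + R$ gives degree $R - 2\deg f = -\chi(Y)$; (c) additivity of degree under tensor product, so $\deg K^{\otimes 2} = -2\chi(Y)$, hence poles minus zeros equals $2\chi(Y)$, which is $4$ on $\bCP^1$. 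Each step checks out, and you are right that the only delicate point is normalization: the paper's ``Euler characteristic of the bundle'' is the pole-over-zero excess, i.e.\ the negative of the algebro-geometric degree (equivalently, minus the Euler number of $(T^*_{\bC}Y)^{\otimes 2}$), and your sign bookkeeping reconciles the conventions correctly. Two minor observations. First, the lemma's middle sentence asserts the difference equals $2\chi(Y)$ for a ``meromorphic order $k$ differential''; your degree argument shows the correct general value is $k\chi(Y)$ for sections of $K^{\otimes k}$, so the paper's phrasing is literally accurate only for $k=2$ --- the case actually used --- and your method yields the general formula at no extra cost. Second, the existence of a nonconstant meromorphic function on a compact Riemann surface, which your Riemann--Hurwitz computation presupposes, is itself a nontrivial input; your stated fallback of quoting $\deg K = 2g-2$ from Riemann--Roch, or citing \cite{Je} directly, covers this adequately.
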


Obviously, if $\Psi$ is locally represented in two intersecting charts
by $f(z)dz^2$ and by $\tilde{f}(\tilde{z})d\tilde{z}^2$ resp. with a
transition function $\tilde{z}(z)$, then
$f(z)=\tilde{f}(\tilde{z})\left({d\tilde{z}}/{dz}\right)^2.$ Any
quadratic differential induces a metric on its Riemann surface $Y$
punctured at the poles of $\Psi$, whose length element in local
coordinates is given by
$$
|dw|=|f(z)|^{\frac{1}{2}}|dz|.
$$


The above canonical  metric $|dw|=|f(z)|^{\frac{1}{2}}|dz|$ on $Y$ is
closely related to  two distinguished line fields spanned by the
vectors $\xi\in T_zY$ such that
$f(z)dz^2$ is either positive or negative.  The integral curves of the  field given by $f(z)\xi^2>0$ are called \emph{horizontal
trajectories} of $\Psi$, while the integral curves of the second field  given by $f(z)\xi^2<0$ are called \emph{vertical trajectories} of $\Psi$. 
Trajectories of $\Psi$ can be naturally  parameterised by their 
arclength. In fact, in a neighbourhood of a regular point $z_0$ on
$Y,$ one can introduce a local coordinate $w$ called {\em canonical}Ê which is given by 
$$w(z):=\int_{z_0}^z\sqrt{f(\xi)}d\xi.$$ 
Obviously, in this coordinate the quadratic differential itself is given by  $dw^2=f(z)dz^2$ implying that 
horizontal trajectories on $Y$ correspond to horizontal straight lines in the $w$-plane. 

\medskip
Since we will only use horizontal trajectories of meromorphic
quadratic differentials, we will refer to them simply as 
\emph{trajectories}.

\begin{defi+}\label{def:domains}
A trajectory of a meromorphic quadratic differential $\Psi$ is called
\emph{critical}, if there exists a finite critical point of $\Psi$
belonging to its closure.  
For a
given meromorphic differential $\Psi$, denote by $K_\Psi\subset Y$ the
closure of the union of
critical trajectories of $\Psi$. 

\end{defi+}

Recall that, by Jenkins' Basic
Structure Theorem \cite[Theorem~3.5, pp. 38-39]{Je}, the set
$Y\setminus (K_\Psi \cup Cr_\Psi)$ consists of a finite number of the so-called 
circle, ring,
  strip and end domains. (For the detailed definitions and  information we refer  to loc. cit).  The names \emph{circle}, \emph{ring} and \emph{strip} domain are
describing their images under the analytic continuation of the mapping
given by the canonical coordinate; the \emph{end} domain (also referred to as \emph{half-plane} domain)  is mapped by the canonical coordinate onto the half-plane.

The interior of the $K_\Psi$ can be non-empty,
and consists of a finitely many components, each bounded by a (finite) union of critical trajectories. These components are referred to as the \emph {density} domains.

The decomposition of $Y\setminus(K_\Psi \cup Cr_\Psi)$ into circle, ring, strip, end and density domains
constitutes the so-called  \emph{domain configuration} of $\Psi$.


\medskip
It is known that quadratic differentials on $\bCP^1$ with at most
three distinct poles do not have density domains, see Theorem
3.6 (three pole theorem) of \cite{Je}.  This result, in particular, explains Example~\ref{ex1} below in which case the domain configuration consists only of strip and end domains, see e.g. \cite{Ba}.  But starting with 4 distinct poles in $\bCP^1,$ they
become unavoidable.


\subsection{Strebel differentials}

\begin{defi+}
A compact non-critical trajectory $\gamma$ of a meromorphic $\Psi$ is
called \emph{closed}. It  is necessarily diffeomorphic to a circle.
\end{defi+}

\begin{defi+}
A quadratic differential $\Psi$ on a compact Riemann surface $Y$ without boundary is called
\emph{Strebel}  if the complement to the union of its closed trajectories has vanishing Lebesgue measure.
\end{defi+}

\begin{rema+} In the nomenclature of Definition \ref{def:domains},
the complement
$Y\setminus (K_\Psi \cup Cr_\Psi)$ for an arbitrary Strebel differential $\Psi$
on $Y$ consists of (finitely many) circular and ring
domains, as can be easily deduced from the results of Ch. 3,
\cite{Str}.
\end{rema+}

One can also easily derive the following statement:

\begin{lemma}
\label{lemma1} If a meromorphic quadratic   differential $\Psi$
 is Strebel, then it has no poles of order
greater than 2. If it has a pole of order 2, then the residue of $\sqrt{\Psi}$ at
this pole is negative.
\end{lemma}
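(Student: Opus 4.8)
The plan is to read off the local trajectory structure of $\Psi$ near each pole from the canonical coordinate $w=\int\sqrt{\Psi}$, and to show that both a pole of order $\ge 3$ and a double pole violating the residue condition force an open set of positive Lebesgue measure swept out by non-closed trajectories, contradicting the Strebel hypothesis. So suppose first that $p$ is a pole of order $n\ge 3$ and pick a local coordinate $z$ vanishing at $p$, so that $\Psi=(c\,z^{-n}+\dots)\,dz^2$ with $c\ne 0$. Then $\sqrt{\Psi}\sim\sqrt{c}\,z^{-n/2}\,dz$, so $w$ is asymptotic to a constant multiple of $z^{(2-n)/2}$ and $w\to\infty$ as $z\to p$; a punctured neighbourhood of $p$ therefore decomposes into $n-2$ end (half-plane) domains. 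In each of these the trajectories are the horizontal lines of the $w$-plane, both ends of which run off to $w=\infty$, i.e. tend to $p$; in particular they are non-compact, hence not closed, and they fill an open set of positive measure. This set lies in the complement of the union of closed trajectories, so that complement has positive Lebesgue measure, contradicting the Strebel property. Hence $\Psi$ has no pole of order greater than $2$.

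Now suppose $p$ is a double pole, $\Psi=(c\,z^{-2}+\dots)\,dz^2$ with $c\ne 0$, so that $\sqrt{\Psi}=(r\,z^{-1}+\dots)\,dz$, where the residue $r$ is defined up to sign and $r^2=c$ is a genuine invariant of $\Psi$ at $p$. The canonical coordinate is now $w=r\log z+\dots$, and writing $z=\rho e^{i\theta}$ the trajectories near $p$ are the level sets of $\Im(r\log z)=\Im(r)\log\rho+\Re(r)\,\theta$. If $r\notin i\R$, these level sets are radial rays (when $r\in\R$) or logarithmic spirals (otherwise), each of which runs or winds into $p$ without closing up, again producing an open set of positive measure of non-closed trajectories and contradicting the Strebel property. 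Only when $\Re(r)=0$, i.e. $r$ is purely imaginary and $r^2=c<0$, do the level sets reduce to the circles $\rho=\mathrm{const}$, the closed trajectories surrounding $p$ in a circle/ring domain. Thus a Strebel $\Psi$ with a double pole must have $r^2<0$, i.e. $r\in i\R$; this is the condition the statement records by saying that the residue of $\sqrt{\Psi}$ at $p$ is negative.

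The computations themselves are routine normal-form manipulations; the only genuine point is to know that the leading term of $\Psi$ controls the trajectory structure throughout a full punctured neighbourhood of the pole, so that the omitted lower-order terms cannot conspire to make the nearby trajectories close up. This is precisely the content of the local structure theorems for trajectories at poles in Chapter 3 of \cite{Str} (see also \cite{Je}), which I would invoke rather than reprove; granting them, both halves of the lemma follow from the measure-theoretic contradiction above, consistently with the Remark that a Strebel differential admits only circle and ring domains.
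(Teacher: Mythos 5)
Your proof is correct, and it follows essentially the route the paper itself intends: the paper omits an explicit proof, remarking only that a Strebel differential has a domain configuration consisting solely of circle and ring domains ``as can be easily deduced from the results of Ch.~3, \cite{Str}'', which is precisely the local trajectory structure at poles that you spell out via the canonical coordinate and the positive-measure contradiction. Your reading of the residue condition as $r^2<0$ (i.e.\ the residue of $\sqrt{\Psi}$ purely imaginary, equivalently the leading coefficient $c$ of the double pole negative) is the mathematically correct interpretation of the lemma's loose phrasing, and your explicit appeal to the local structure theorems to control the lower-order terms closes the only nontrivial gap.
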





 \medskip
 These reasonings are summarized in the next statement. 
 
 \begin{lemma}\label{lm:structure} For any Strebel differential  $\Psi$ on~$Y$, the following holds. 

\noindent
{\rm (i)}  $K_\Psi$ is  the set  of all non-closed horizontal trajectories of $Y$  and $Y\setminus (K_\Psi\cup Cr_\Psi)$ 
is a disjoint union of finitely many cylinders.  

\noindent
{\rm (ii)} The metric $|\Psi|$ restricted to any of these  cylinders  gives the
standard metric of a cylinder with some perimeter~$p$ given by the length of the horizontal trajectories and
some length~$l$ given by  the length of the vertical trajectories
joining the bases of the cylinder. (Notice that $l$ can  be infinite.)

\noindent
{\rm (iii)}
A cylinder is conformally equivalent to the
annulus $e^{-l/p} < |z| < 1$, or to a punctured disc
if $l = \infty$.
\end{lemma}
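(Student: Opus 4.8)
The plan is to read off all three assertions from the domain decomposition recorded in the Remark preceding the lemma: for a Strebel $\Psi$ the complement $Y\setminus(K_\Psi\cup Cr_\Psi)$ consists of finitely many circle and ring domains, each foliated by closed trajectories. To prove (i) I would first let $U\subset Y$ be the union of all closed trajectories. A closed trajectory sits in a flow box in which, in the canonical coordinate $w=\int\sqrt\Psi$, all nearby horizontal leaves are again closed, so $U$ is open; by the Strebel hypothesis $Y\setminus U$ is a null set, hence $U$ is dense. The Remark then says that the only domains in $Y\setminus(K_\Psi\cup Cr_\Psi)$ are circle and ring domains (there are no strip, end, or density domains), so $K_\Psi$ has empty interior. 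Consequently a regular point lies on a closed trajectory precisely when it lies outside $K_\Psi$, every non-closed trajectory is critical, and $K_\Psi$ is exactly the union of the non-closed trajectories together with the finite critical points in their closures. Deleting in addition the double poles (the centres of the circle domains, which belong to $Cr_\Psi$) leaves the ring domains and the punctured circle domains; each is topologically a cylinder, and there are finitely many by Jenkins' Basic Structure Theorem. This gives (i).

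For (ii) I would fix one such cylinder $D$ and pull back the canonical coordinate $w=\int\sqrt\Psi$, which is an orientation-preserving local isometry from $D$ with the metric $|dw|=|f|^{1/2}|dz|$ into the $w$-plane. Since $\sqrt\Psi$ has no critical point in $D$, analytic continuation of $w$ around a core curve of the annulus is a fixed-point-free holomorphic automorphism of the developing strip that preserves the foliation by closed horizontal leaves; being fixed-point-free it is a translation $w\mapsto w+p$ (the competing involution $w\mapsto -w+c$ has a fixed point and is excluded), and preservation of the horizontal closed leaves forces $p$ to be real. This $p=\oint\sqrt\Psi$ is the common $\Psi$-length of the closed leaves, because along a horizontal leaf $\sqrt\Psi=dw$ integrates to the real arclength increment. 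Thus $w$ realises $D$ as the flat cylinder $\{0<\Im w<l\}/(w\sim w+p)$ of perimeter $p$, where $l$ is the $\Psi$-distance between the two boundary curves measured along the vertical trajectories $\Re w=\mathrm{const}$. For a ring domain $l$ is finite; for a circle domain the inner end is the central double pole, which lies at infinite distance since near it the metric behaves like $|dz|/|z|$ and $\int|dz|/|z|$ diverges, so $l=\infty$. This is (ii).

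Assertion (iii) then follows by uniformising the flat cylinder: the map $z=\exp(2\pi i\,w/p)$ descends to the quotient and is a conformal isomorphism of $D$ onto a round annulus of modulus $l/p$, sending $\Im w=0$ to the unit circle and $\Im w=l$ to the inner circle; in the normalisation of the statement (up to the usual factor of $2\pi$ in the modulus) this is $e^{-l/p}<|z|<1$. When $l=\infty$ the inner radius collapses to $0$ and the annulus becomes the punctured disc $0<|z|<1$, as claimed.

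The one genuinely delicate step is (i): converting the measure-theoretic Strebel condition into the structural statement that the non-closed trajectories are exactly the critical ones and that their complement is a finite disjoint union of annuli. This is precisely where I must lean on Jenkins' Basic Structure Theorem together with the Remark to exclude strip, end and density domains. Once this is in place, (ii) and (iii) are the standard computation with the canonical coordinate and the exponential uniformisation of a flat cylinder.
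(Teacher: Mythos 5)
Your proof is correct and follows exactly the route the paper intends: the paper states this lemma without a separate proof (``These reasonings are summarized in the next statement''), relying on the preceding Remark that for a Strebel differential the complement $Y\setminus(K_\Psi\cup Cr_\Psi)$ consists of finitely many circle and ring domains (deduced from Ch.~3 of Strebel) together with Lemma~\ref{lemma1}, and your argument simply supplies the standard details -- exclusion of strip, end and density domains for (i), the canonical coordinate and translation monodromy for (ii), and the exponential uniformisation for (iii). Your parenthetical remark about the factor of $2\pi$ in $e^{-l/p}<|z|<1$ is also the right reading of the paper's normalisation, so nothing further is needed.
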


\medskip

Strebel differentials play important role in the theory of univalent functions and the moduli spaces of algebraic curves. They enjoy a large number of extremal properties. Basic results on their existence and uniqueness can be found in Ch. VI of \cite{Str}, see especially Theorem 21.1. 





\section{Non-chaotic quadratic differentials}\label{sec:non-chaotic}

\begin{defi+} Given a meromorphic quadratic differential $\Psi$ on a
  compact Riemann surface $Y$, we say that $\Psi$ is {\it non-chaotic}
  if there exists a continuous and piecewise smooth function 
$$F:Y\setminus Cr_\Psi \to \bR$$ defined
  on the complement to the set $Cr_\Psi $ of critical points of  $\Psi$ such that:  

\medskip
\noindent
(i) $F$ is non-constant on any open subset of $Y\setminus Cr_\Psi$,

\noindent
(ii) yet $F$ is constant on each horizontal trajectory of $\Psi$.

Such a function $F$ is called a {\it level function} of $\Psi$.
\end{defi+}

\begin{example}\label{ex1} A polynomial quadratic differential
  $\Psi=P(z)dz^2$, (where $P(z)$ is a univariate polynomial) is non-chaotic on $\bC P^1$.
\end{example}

It is almost immediate that Strebel differentials are non-chaotic: the
distance (in the Riemannian metric induced by $\Psi$) to $K_\Psi\cup Cr_\Psi$
serves as the level function.

\medskip
Non-chaotic quadratic differentials are easy to characterise in terms
of domain decompositions. Namely, the following statement holds.  

\begin{lemma}\label{lm:non-chaotic} A quadratic differential $\Psi$ is
  non-chaotic if and only if $\Psi$ has no density domains, i.e., the
  closure of each horizontal trajectory of
$\Psi$ on $Y$ coincides with this trajectory plus possibly some critical points. 
\end{lemma}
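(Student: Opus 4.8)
The plan is to prove the two implications separately, regarding the reformulation in the statement---``no density domains'' versus ``every trajectory closure equals the trajectory together with possibly some critical points''---as a direct consequence of Jenkins' Basic Structure Theorem. By that theorem every horizontal trajectory is either closed, or a critical trajectory limiting onto finite critical points, or recurrent; and a recurrent trajectory is precisely what sweeps out a component of $\mathrm{int}(K_\Psi)$, i.e.\ a density domain. Thus both phrasings of the hypothesis amount to $\mathrm{int}(K_\Psi)=\emptyset$, which I will take as the working form.

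For the direction ``non-chaotic $\Rightarrow$ no density domains,'' suppose $F$ is a level function and, for contradiction, that $\mathrm{int}(K_\Psi)\neq\emptyset$. Since $\Psi$ has only finitely many finite critical points and each emits finitely many separatrices, there are finitely many critical trajectories $\gamma_1,\dots,\gamma_m$, and $K_\Psi=\bigcup_{i=1}^m\overline{\gamma_i}$. A finite union of closed sets with empty interior again has empty interior, so some $\overline{\gamma_i}$ must have non-empty interior. But $F$ is constant on $\gamma_i$ by property (ii), hence by continuity on the regular points of $\overline{\gamma_i}$; as the interior of $\overline{\gamma_i}$ is non-empty and meets $Cr_\Psi$ in only finitely many points, $F$ is constant on a non-empty open subset of $Y\setminus Cr_\Psi$---contradicting property (i). This forces $\mathrm{int}(K_\Psi)=\emptyset$, and this direction is routine.

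For the converse, ``no density domains $\Rightarrow$ non-chaotic,'' I will exhibit an explicit level function, namely the distance to the critical graph in the metric induced by $\Psi$,
\[
F(p):=d_\Psi\bigl(p,\,K_\Psi\cup Cr_\Psi\bigr).
\]
Continuity is automatic, as $F$ is $1$-Lipschitz for $d_\Psi$. To check (ii), fix a regular point $p$ in a component $D$ of $Y\setminus(K_\Psi\cup Cr_\Psi)$; because $D$ is a connected component, any path from $p$ to $K_\Psi\cup Cr_\Psi$ first meets $\partial D\subseteq K_\Psi\cup Cr_\Psi$, so $F(p)=d_\Psi(p,\partial D)$, computed inside $\overline D$. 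By Jenkins' theorem $D$ is a circle, ring, strip, or end domain, and in the canonical coordinate $w$---a local isometry carrying $D$ onto a disc, annulus, strip, or half-plane---the boundary $\partial D$ lies on horizontal lines $\{\Im w=\mathrm{const}\}$. Hence $F$ equals the transverse distance to those lines, which depends only on $\Im w$ and is therefore constant along each horizontal trajectory; within $D$ it is smooth away from the lower-dimensional medial locus where two boundary components are equidistant, so $F$ is piecewise smooth.

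It remains to verify (i), and this is where the hypothesis is decisive. Were $F$ constant on some non-empty open set $U\subseteq Y\setminus Cr_\Psi$, then, since $\mathrm{int}(K_\Psi)=\emptyset$, the set $U$ would meet some domain $D$ in a non-empty open subset; but there $F$ is the transverse distance, whose level sets are unions of trajectories together with the medial locus and so are nowhere open. This contradiction yields (i). The main obstacle is precisely the verification of (ii): one must confirm that the global distance $d_\Psi(p,K_\Psi\cup Cr_\Psi)$ is actually realized within the single domain $\overline D$ and there coincides with the flat transverse distance read off from the canonical coordinate---this is exactly the point at which the domain decomposition of Jenkins' Basic Structure Theorem carries the argument.
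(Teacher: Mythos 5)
In the direction ``non-chaotic $\Rightarrow$ no density domains,'' your step asserting that there are finitely many critical trajectories $\gamma_1,\dots,\gamma_m$ with $K_\Psi=\bigcup_i \overline{\gamma_i}$ is not justified under the paper's definitions, and it fails exactly in the situation you have hypothesized for contradiction. By Definition~\ref{def:domains} a trajectory is critical as soon as a finite critical point lies in its \emph{closure}; it need not emanate from one. When a density domain is present, each of the uncountably many recurrent trajectories sweeping it contains the closure of the whole domain --- in particular the zeros on its boundary --- inside its own closure, hence is critical. So ``critical trajectory'' is not synonymous with the finitely many separatrices, $K_\Psi$ need not be a finite union of single-trajectory closures, and the ``finite union of closed nowhere dense sets has empty interior'' argument collapses precisely where you invoke it. (Whether the finitely many separatrix closures still exhaust $K_\Psi$ in the chaotic case is a nontrivial fact about density of separatrices in minimal components, which you neither state nor prove.) The repair is immediate and is the paper's own one-line argument, whose ingredient you already state in your preamble: by the Basic Structure Theorem the trajectories filling a density domain are dense in it, so a level function, being constant on one such trajectory and continuous, would be constant on its closure --- a set with non-empty interior --- contradicting property (i).

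The converse direction is correct and essentially coincides with the paper's: both reduce, via Jenkins' theorem, to circle, ring, strip and end domains and produce a function of $\Im w$ that is constant on trajectories and vanishes on the boundary, glued along $K_\Psi$. The paper's proof takes the sine of the imaginary part of the canonical coordinate, while you take the $\Psi$-distance to $K_\Psi\cup Cr_\Psi$ --- which is exactly the level function the paper itself constructs in Proposition~\ref{prop:prepot} --- and your verifications (that the global distance is realized within the closure of each domain, coincides there with the transverse coordinate, and has nowhere-open level sets once $\mathrm{int}(K_\Psi)=\emptyset$) are the right ones; so this half is sound and differs from the paper only cosmetically. A final caveat: identifying ``no density domains'' with $\mathrm{int}(K_\Psi)=\emptyset$, as both you and the paper do, tacitly sets aside differentials without any finite critical points (e.g.\ $dz^2$ on a torus with irrational horizontal foliation, where $K_\Psi=\emptyset$ yet every trajectory is dense); for such cases the trajectory-closure formulation in the statement of Lemma~\ref{lm:non-chaotic} is the operative one.
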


\begin{proof}
Assume non-chaoticity. In this case $K_\Psi\cup Cr_\Psi$ is a union of finite
number of critical
trajectories and critical points, and its complement is a union of domains comprised
either of compact trajectories (ring and circle domains) or of trajectories
isometric to real line (strip and end domains). On each of these
domains we can construct a function that is continuous, constant on
the trajectories, but not on any open set, and which is vanishing on
the boundary of the domain (e.g. by taking the sine of the imaginary part of the
canonical coordinate). Gluing together these functions (originally defined on individual domains, but vanishing on the boundary) 
along $K_\Psi$ delivers the desired continuous  level function.

If $\Psi$ {\em is chaotic}, there exists a 
trajectory with closure having a non-empty interior. A level function
should be constant on this trajectory and continuous, hence constant
on an open set, a contradiction.
\end{proof}

As we mentioned, any Strebel differential is non-chaotic. Moreover, the following holds.  

\begin{proposition}\label{pr:Strebel}
A quadratic differential $\Psi$ is Strebel if and only if it is
non-chaotic and  has a level function $F$ with finite limits at each
critical point $p\in Cr_\Psi$, i.e. a level function $F$ that can be extended by continuity from $Y\setminus Cr_\Psi$ to  $Y$.
\end{proposition}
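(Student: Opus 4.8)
The plan is to handle the two implications separately, after reducing both to the domain decomposition of a non-chaotic differential. By Lemma~\ref{lm:non-chaotic}, non-chaoticity is equivalent to the absence of density domains, so for non-chaotic $\Psi$ the set $Y\setminus(K_\Psi\cup Cr_\Psi)$ is a finite union of circle, ring, strip, and end domains, while $K_\Psi\cup Cr_\Psi$ is a finite union of critical trajectories and points and hence has Lebesgue measure zero. Circle and ring domains are swept out by closed trajectories, whereas strip and end domains are swept out by non-closed trajectories and have positive measure. Consequently a non-chaotic $\Psi$ is Strebel if and only if it has no strip and no end domains, and the proposition reduces to the assertion that a continuously extendable level function exists if and only if strip and end domains are absent.

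For the forward implication, assume $\Psi$ is Strebel. Then it is non-chaotic, and by the Remark following the definition of Strebel differentials its complement consists only of ring domains (finite cylinders) and circle domains, each a punctured disc whose centre is a second-order pole, the highest order allowed by Lemma~\ref{lemma1}. Here I would deliberately avoid the distance-to-$K_\Psi$ level function: near the centre of a circle domain the induced metric is $c\,|dz|/|z|$, so the distance to the boundary grows like $\log(1/|z|)$ and diverges at the central pole, and the distance function cannot be extended continuously. Instead I would prescribe on each cylinder a \emph{bounded} profile that is constant on trajectories. On a ring domain, realised as an annulus $e^{-l/p}<|\zeta|<1$ as in Lemma~\ref{lm:structure}, set $F=\phi(|\zeta|)$ for a continuous non-constant $\phi$ vanishing at both radii; on a circle domain $0<|z|\le R$ set $F=\psi(|z|)$ with $\psi$ non-constant, $\psi(R)=0$, and $\psi(t)\to\psi(0^{+})$ finite as $t\to0^{+}$ (for instance $\psi(t)=R-t$). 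Declaring $F=0$ along $K_\Psi$ glues these into a continuous level function on $Y\setminus Cr_\Psi$, which then extends continuously by $0$ at each finite critical point (an endpoint of critical trajectories on a domain boundary) and by $\psi(0^{+})$ at each central pole. The only delicate point of this direction is precisely the replacement of the natural but divergent distance function by a bounded reparameterisation.

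For the reverse implication, assume $\Psi$ is non-chaotic and carries a level function $F$ that extends continuously to all of $Y$; I claim no strip or end domain can occur. Suppose for contradiction that $D$ is such a domain, and let $w$ be the canonical coordinate mapping $D$ onto a half-plane $\{\Im w>0\}$ or a strip $\{0<\Im w<h\}$. The horizontal trajectories in $D$ are the level sets $\{\Im w=c\}$, so $F|_D=g(\Im w)$ for a single-variable function $g$; since $F$ is non-constant on the open set $D$, the function $g$ is non-constant, and I may choose $c_1\ne c_2$ with $g(c_1)\ne g(c_2)$. The structural input I would invoke is that at least one ``end at infinity'' of the half-plane or strip corresponds to a pole $p$ of $\Psi$ in $\partial D$, and that every trajectory $\{\Im w=c\}$ has $p$ in its closure (its points tend to $p$ as $\Re w\to+\infty$). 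Then $F$ tends to $g(c_1)$ along $\{\Im w=c_1\}$ and to $g(c_2)\ne g(c_1)$ along $\{\Im w=c_2\}$, two paths both terminating at $p$, contradicting continuity of the extension of $F$ at $p$. Hence $\Psi$ has only circle and ring domains, the complement of the union of its closed trajectories is contained in the measure-zero set $K_\Psi\cup Cr_\Psi$, and $\Psi$ is Strebel.

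The main obstacle, and the conceptual core of the statement, is this reverse implication: one must have in hand the fact from Jenkins' structure theory that the non-closed trajectories of a strip or end domain all accumulate at a common boundary pole, so that a continuous extension of $F$ is forced to reconcile a whole interval of trajectory-values $g(c)$ at the single point $p$. Granting this, the contradiction is immediate; the remaining verifications, including the gluing and the continuous extension in the forward direction, are routine.
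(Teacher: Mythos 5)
Your proof is correct and follows essentially the same route as the paper's: reduce via Lemma~\ref{lm:non-chaotic} to the absence of strip and end domains, construct a level function on the circle and ring domains glued along $K_\Psi$ for the forward direction, and for the converse use that the one-parametric family of trajectories in a strip or end domain accumulates at a common critical point, forcing a discontinuity of any function that is constant on trajectories but non-constant on the sub-strip. Your one genuine refinement is the bounded profile $\psi(|z|)$ on circle domains: the paper simply asserts that the construction of Lemma~\ref{lm:non-chaotic} (the sine of the imaginary part of the canonical coordinate) is ``clearly'' continuous on all of $Y$, whereas that function oscillates without a limit at the central double pole, so your explicit replacement of the divergent distance function by a monotone bounded reparameterisation actually patches a small point the paper glosses over.
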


\begin{proof}
As we mentioned above, a non-chaotic differential is Strebel if and
only if its domain decomposition consists only of ring and circle
domains. It is clear that in this case the construction of Lemma
\ref{lm:non-chaotic} yields a function continuous on all of
$Y$. Conversely, the existence of an  end or a strip domain implies that there is a
one-parametric family of non-critical trajectories
converging (on one end of the strip) to a critical point $C_o\in Cr_\Psi$. The union of
these trajectories forms a sub-strip in a strip or an end domain. A
non-constant function on such a sub-strip which is constant along the
trajectories will automatically have discontinuity at $C_o$.
\end{proof}


To move further, let us recall some basic facts from complex analysis and
potential theory  on  Riemann surfaces, see e.g. \cite {GH}.

Let~$Y$ be
an  (open or closed) Riemann surface and $h$ be a real- or
complex-valued smooth  function on~$Y$.  

\begin{defi+}
The {\em Levy form} of $h$ (with respect to a local coordinate 
$z$) is given by 
\begin{equation}\label{levyn} 
\mu_h:=2i \frac{\partial^2 h}{\partial z \partial \bar z}\; dz\wedge d\bar z.
\end{equation}
\end{defi+}

In terms of the real and imaginary parts $(x,y)$ of  $z,$  $\mu_h$ is given by 
$$\mu_h=\left(  \frac{\d^2 h}{\d x^2}+ \frac{\d^2 h}{\d y^2} \right) \;  dx \wedge dy=\Delta h dx \wedge dy. $$

If $h$ is a smooth real-valued function, $\mu_h$ can be also thought
of as a real signed measure on $Y$ with  a smooth density. In
potential theory $h$ is usually referred to as the {\em (logarithmic)
  potential}  of the measure $\mu_h$, see e.g. \cite{GH}, Ch.3. Notice
that (\ref{levyn}) makes sense for  an arbitrary  complex-valued
distribution $h$ on~$Y$ if one interprets $\mu_h$  as a 2-current on
$Y$, i.e. a linear functional on the space of smooth compactly
supported functions on $Y$,  see e.g. \cite {Fed}.

Such a current is necessarily exact since the inclusion of
smooth forms into currents induces the (co)homology isomorphism.
Recall that any complex-valued measure on $Y$ is a $2$-current
characterised by the additional requirement that its value on a smooth
compactly supported function depends only on the values of this
function, i.e. on its $0$-jet (and does not depend on its derivatives,
i.e. on higher jets).  Notice that if $Y$ is compact and connected,
then exactness of $\mu_h$ is equivalent to the vanishing of the
integral of $\mu_h$ over $Y$.

We should remark here that the Levy form depends on the (local) metric
structure defined by the (local) coordinate $z$, unless it is a sum of
the delta-functions.

\begin{example} \label{Ex:1} a) If $h= \ln |z|$ on $\CP^1$, then $\mu_h$ is a real measure
supported on the $2$-point set $\{0, \infty\}$ with $\mu_h(0) = 2\pi$, $\mu_h(\infty) = -2\pi$.  

b) If $h = |\Im z|$ on $\bC$, then $\mu_h$ is a measure
supported on the real axis, namely, $\mu_h = 2 dx$, i.e. twice the usual Lebesgue measure on the real line. 
\end{example}

The easiest way to verify these examples is to use Green's formula:
$$
\mu_h(D) = \int_D 2i\;\d \db h = \int_{\d D} \frac{\d h}{\d n} dl,
$$
which provides a way to calculate $\mu_h(D)$ where $D$ is a arbitrary compact domain in $Y$ with a smooth boundary; $ \frac{\d h}{\d n}$ is the derivative of $h$ w.r.t the outer normal, and $dl$ is the length element of the boundary.

$$
\int_D f \Delta h = - \int_D (df, dh) + 
\int_{\d D} f \frac{\d h}{\d n} dl.
$$
Here $D$ is a domain bounded 
by a piecewise smooth loop $\d D$ orinted counterclockwise; $\d h/ \d n$ is the
derivative in the orthogonal direction to $\d D$, while
$dl$ is the length element on $\d D$.





\medskip
Our next goal is, for a given non-chaotic $\Psi$,   to find its level function $F$  which is closely related to the metrics on $Y$ induces by $\Psi$ or, alternatively, whose Levi  form $\mu_F$ has a small support. 
Such $F$ is readily available by the following statement. 

\begin{proposition}\label{prop:prepot} For any non-chaotic differential $\Psi$ on $Y$, there exist its level functions which are piecewise harmonic and which are non-smooth on finitely many trajectories of $\Psi$. 
\end{proposition}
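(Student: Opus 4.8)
The plan is to build the required level function by working domain‑by‑domain in the domain configuration and then gluing along $K_\Psi$, exactly in the spirit of the proof of Lemma~\ref{lm:non-chaotic}, but now choosing the local pieces to be \emph{harmonic} rather than merely continuous. Since $\Psi$ is non‑chaotic, Lemma~\ref{lm:non-chaotic} tells us that $K_\Psi\cup Cr_\Psi$ is a finite union of critical trajectories and critical points, and $Y\setminus(K_\Psi\cup Cr_\Psi)$ decomposes into finitely many ring, circle, strip, and end domains. On each such domain the canonical coordinate $w=u+iv$ furnishes a biholomorphism onto a standard model (a straight strip, half‑plane, or annulus in the $w$‑plane) under which horizontal trajectories become horizontal level lines $\{v=\mathrm{const}\}$. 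The natural candidate for the local level function is therefore $v=\Im w$, the imaginary part of the canonical coordinate, which is by construction harmonic, constant on trajectories, and non‑constant on any open set.

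First I would make precise the local model on each domain type. On a strip or end domain the canonical coordinate maps the domain onto a horizontal strip $\{0<v<c\}$ (with $c=\infty$ for an end domain), and $v$ itself is a bounded harmonic function vanishing on the horizontal boundary trajectories. On a ring or circle domain the canonical coordinate is multivalued, but $v=\Im w$ is still well‑defined up to an additive constant and a sign, and it measures the signed distance across the cylinder; by Lemma~\ref{lm:structure} this cylinder has finite height $l$, so $v$ ranges over an interval $[0,l]$ and extends harmonically, vanishing on one boundary trajectory. In every case the local piece is harmonic in the interior, continuous up to the boundary trajectories, and takes a prescribed constant (which I normalise to $0$) on those boundary trajectories that lie in $K_\Psi$.

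Next I would glue. Because each local piece has been arranged to vanish on the boundary critical trajectories along which distinct domains are joined, the piecewise‑defined function is continuous across $K_\Psi$, and one obtains a globally continuous, piecewise harmonic function $F$ on $Y\setminus Cr_\Psi$ that is constant on every horizontal trajectory and non‑constant on open sets — that is, a level function in the sense of the definition. By construction $F$ is smooth (indeed harmonic) on the interior of each domain and the only possible non‑smoothness occurs along the finitely many boundary trajectories comprising $K_\Psi$, which is precisely the assertion that $F$ is non‑smooth on at most finitely many trajectories of $\Psi$.

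The main obstacle, and the step deserving the most care, is the gluing: I must verify that the normalisations can be chosen simultaneously and consistently so that the local harmonic pieces match continuously across \emph{every} shared boundary trajectory. For a differential with several adjacent domains sharing critical trajectories this is a finite compatibility bookkeeping problem — one must check that requiring each piece to vanish on its $K_\Psi$‑boundary does not produce conflicting demands where two pieces meet the same trajectory from opposite sides, and that the resulting $F$ has no jump. A secondary point to handle is the behaviour near infinite critical points, where $v=\Im w$ need not stay bounded (as in the end‑domain case $c=\infty$); there $F$ legitimately fails to extend continuously to $Cr_\Psi$, which is consistent with Proposition~\ref{pr:Strebel} and does not affect the claimed conclusion, since $F$ is only required on $Y\setminus Cr_\Psi$.
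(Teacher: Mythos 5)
Your construction has a genuine gap, and it sits exactly at the step you flagged: the gluing cannot be carried out with pieces that are harmonic on an \emph{entire} strip or ring domain. A function that is harmonic and constant on the horizontal trajectories of a domain is, in the canonical coordinate $w=u+iv$, a function of $v$ alone, hence affine: $F=av+b$. On a strip or ring domain of finite height $c$ whose \emph{both} boundary sides lie in $K_\Psi$ (the typical situation), such a function cannot vanish on both sides unless $a=0$, i.e.\ unless it is constant, violating condition (i) in the definition of a level function; indeed your own local model already shows this, since $v$ vanishes on $\{v=0\}$ but equals $c$ on the other side. So the normalisation ``each piece vanishes on its $K_\Psi$-boundary'' is not delicate bookkeeping to be verified --- it is impossible. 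If instead you allow arbitrary boundary constants per domain, the matching conditions across all shared trajectories become precisely the cocycle-triviality condition of Proposition~\ref{prop:gradient}: a function harmonic on each whole domain and constant on trajectories is (up to rescaling on each domain) a natural level function, and those exist only for the strictly smaller class of \emph{gradient} differentials; they fail, for instance, whenever the Reeb graph carries an oriented cycle of nonzero signed length. Your argument, if it closed, would therefore prove too much, namely that every non-chaotic differential is gradient.

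The statement does not ask for $F$ to be smooth away from $K_\Psi$; it only asks for non-smoothness along \emph{finitely many trajectories}, and this slack is exactly what the paper's proof exploits. The paper takes $F$ to be the distance to $K_\Psi\cup Cr_\Psi$ in the $\Psi$-metric: locally $F=\pm(v-\mathrm{const})$, so $F$ is harmonic where smooth, it vanishes on $K_\Psi$ by construction (so no gluing problem can arise), and the price is a ridge of non-smoothness along the single middle trajectory of each strip and ring domain --- finitely many non-critical trajectories in addition to $K_\Psi$. Replacing your ``harmonic on each domain'' by ``distance to the boundary, harmonic off the midline'' repairs the proof. A minor further correction: a circle domain is a cylinder of infinite height (a punctured disc, cf.\ Lemma~\ref{lm:structure}(iii)), not of finite height $l$ as you assert; this is harmless for the repaired argument, since a circle domain has a single boundary side and the distance to it is harmonic on the whole domain.
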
 

\begin{proof}
Take as $F$ the {\em distance to $K_\Psi \cup Cr_\Psi$} in the metric defined by
$\Psi$. Locally, its differential coincides with $\Im\sqrt{\Psi}$,
hence is harmonic. It is immediate that $F$ is smooth outside $K_\Psi\cup Cr_\Psi$ and
the (finite) union of the horizontal trajectories running in the
middle of the strip and annular domains.
\end{proof}

\begin{rema+} We will call level functions constructed in Proposition~\ref{prop:prepot} \emph{piecewise harmonic}. For any piecewise harmonic level function $F$, its Levy form $\mu_F$ is a well-defined  $2$-current  supported  on the   union of $Cr_\Psi$ and finitely many trajectories where $F$ is non-smooth. For example, in case of a Strebel differential $\Psi$, the Levy form of any piecewise harmonic level function will have point masses exactly at the double poles of  $\Psi$. 
\end{rema+}

\def\tY{\tilde{Y}}

\section {Gradient differentials}\label{sec:gradient}
\def\sPsi{\sqrt{\Psi}}
\def\bKP{\bar{K}_\Psi}
\def\bK{\bar{K}}
\def\rg{\mathit{Rg}}

Denote by $\bKP$ the union of the critical trajectories and the zeros
and simple poles of $\Psi$, 
$$
\bKP=K_\Psi\cup Cr^{\mathtt{f}}\Psi.
$$

This is a  one-dimensional cell complex embedded into $Y \setminus Cr^-_\Psi$, where $Cr^-_\Psi$ is the union of all infinite critical points and equipped with a
metric (given by $|d\Im\sPsi|$) that turns $\bKP$ into a complete
metric space (the topologically open ends of the graph have an
infinite length). 

Consider the decomposition 
$$
\bKP=\amalg_{\alpha\in A} \bK_\alpha
$$
into the set of its connected components.

Each of these components also carries  the structure of the
{\em fat graph}, encoded by the collection of cyclic permutations of the
edges incident to a given vertex, one
for each vertex of the graph. 

These cyclic permutations can be thought of as a single permutation
$\sigma_0$ of the
set of {\em flags of a graph}, that is of the pairs consisting of a vertex and
its incident edge; the orbits of the permutation $\sigma_0$ are in
one-to-one correspondence with the vertices of the fat graph. 

The other permutation of the flags of the fat graph is the involution
$\sigma_1$ interchanging the two flags corresponding to the same edge. 

The composition cycles of the product $\sigma_0\sigma_1$
correspond to the {\em boundary components} of the fat graph, which
list  the trajectories bounding
the connected components of the complement $Y \setminus K_\Psi$ in the order fixed by the orientation.

\medskip
We define the {\em Reeb graph} $\rg_\Psi$ of a non-chaotic quadratic
differential $\Psi$ as follows. 
\begin{Def}
  {\rm The \emph{Reeb graph} $\rg_\Psi$ is the metric graph with possibly
   edges of infinite length necessarily ending at leaves. The vertices
  $V_{\Psi}=A$ of the Reeb graph are identified with the set of
  connected components of $\bKP$. The edges $E_{\Psi}$ are
  the spaces of noncritical trajectories (or, equivalently, the factor
  spaces of the connected components of $Y\setminus K_\Psi$ by the equivalence
  relations given by belonging to the same trajectory). The lengths on
  the edges are given by $|\int\Im\sPsi|$.}
\end{Def}

The Reeb graph $\rg_\Psi$ might have loops and multiple edges.
  We remark that on each of the connected components of $Y\setminus K_\psi$,
the {\em square root} of the quadratic differential is the meromorphic
1-form $\sPsi$ defined unambiguously, up to a sign.

\smallskip
The lengths of the edges are finite for the strip or ring components,
and infinite for the circle or end components. The
components corresponding to edges of infinite length are necessarily adjacent to
the poles of order at least $2$, i.e. to the infinite critical points.

We will call a level function $F$ {\em natural} if on any of the
connected components of $Y\setminus K_\Psi$, its gradient matches the real part
of a branch of $\sPsi$:
$$
dF=\pm\Re\sPsi.
$$

A natural level function fixes the orientation on each of the edges of
the Reeb graph $\rg_\Psi$. Together with the length elements on the
edges, these orientations define a family of 1-forms on the edges, and
hence a de Rham cocycle on the Reeb graph $\rg_\Psi$.

\begin{proposition}\label{prop:gradient}
A non-chaotic differential  $\Psi$ admits a natural level function if
and only if the edges of $\rg_\Psi$ can be oriented in such a way that
the resulting $1$-cocycle on $\rg_\Psi$ is trivial. In other words, 
 the sum of the lengths of the edges in any oriented cycle in the
Reeb graph, taken with the signs $\pm$ depending on whether  the
orientation of the cycle is consistent with the orientations of the
edges or not, vanishes.

Conversely, any such orientation defines a natural level function up
to an additive constant.
\end{proposition}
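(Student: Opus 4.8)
The plan is to recognise both directions as the single statement that a global primitive of the edge $1$-form exists if and only if that $1$-form is exact, the only genuine content being that a natural level function glues across $K_\Psi$ into a \emph{continuous} object exactly when the corresponding labelling of the vertices of $\rg_\Psi$ is single-valued. I would begin by recording the local picture on a connected component $C$ of $Y\setminus K_\Psi$: there $\sPsi$ is defined up to sign, $\Re\sPsi$ is a closed real $1$-form vanishing at no regular point, and a natural level function restricted to $C$ satisfies $dF=\pm\Re\sPsi$. Integrating across $C$ transversally to the trajectories produces exactly the length $|\int\Im\sPsi|$ attached to the corresponding edge $e\in E_\Psi$. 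Thus $F$, being constant on trajectories, descends to a function on each edge whose differential is $\pm$ the prescribed length element, and the sign records an orientation of $e$. In this language a natural level function is nothing but a $0$-cochain on $\rg_\Psi$ whose coboundary is the edge length form, and the proposition becomes the assertion that such a primitive exists precisely when the form is exact.

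For the forward implication I would first argue that a natural level function $F$ takes a single value on each connected component $\bK_\alpha$ of $\bKP$. Indeed $F$ is constant along each critical trajectory because it is a level function, and it extends continuously to the finite critical points: near a zero or simple pole the canonical coordinate behaves like a positive fractional power of the local parameter and tends to a finite limit, so the locally constant values agree across all vertices of the fat graph $\bK_\alpha$. This yields a well-defined vertex function $\phi\colon V_\Psi=A\to\bR$, $\phi(\alpha)=F|_{\bK_\alpha}$. Orienting each edge in the direction of increasing $F$, the integral of the length $1$-form along an edge $e$ from $\alpha$ to $\beta$ equals $\phi(\beta)-\phi(\alpha)$; hence the $1$-cocycle is the coboundary of $\phi$, and its signed sum around any cycle telescopes to $0$. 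Since on a graph every $1$-cochain is automatically closed, being a coboundary is the same as being cohomologically trivial, which is the stated condition.

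Conversely, suppose the edges are oriented so that the signed sum of lengths vanishes around every cycle. Then the length $1$-form is exact on $\rg_\Psi$, so on each connected component of the graph I fix a base vertex, set $\phi=0$ there, and define $\phi$ at any other vertex as the signed sum of edge lengths along a connecting path; the cycle condition is exactly what makes this path-independent. I would then build $F$ component by component: on the component $C$ corresponding to an edge $e$ from $\alpha$ to $\beta$, set $F$ equal to $\phi(\alpha)$ plus the signed transverse height, choosing the branch of $\sPsi$ and the sign so that $dF=\pm\Re\sPsi$ and $F$ increases from $\phi(\alpha)$ on one boundary to $\phi(\beta)=\phi(\alpha)+\ell(e)$ on the other. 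By construction $F$ is piecewise harmonic, constant on trajectories, non-constant on any open set, and natural. Infinite-length edges, which are adjacent to the infinite critical points and terminate at leaves, never enter a cycle and impose no constraint; there $F$ simply runs monotonically toward the pole. The remaining freedom in $\phi$ is a single additive constant on each connected component of $Y$, giving the asserted uniqueness up to an additive constant.

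The step I expect to be the main obstacle, and the one carrying the geometric content, is the continuity of the glued $F$ across $K_\Psi$ together with the claim that a natural level function is constant on each $\bK_\alpha$. Everything else is the formal equivalence ``exact $\Leftrightarrow$ admits a primitive'' on a graph. Concretely, along any boundary trajectory lying in some $\bK_\gamma$ I must verify that the limit of $F$ from each adjacent component equals the common value $\phi(\gamma)$, with no hidden jump. I would therefore spend the most care on the local analysis at the finite critical points, confirming that the several trajectory ends and all incident strip and ring domains share the limiting value of $F$, so that the passage to $\rg_\Psi$ is legitimate in both directions.
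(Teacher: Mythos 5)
Your argument is correct and takes essentially the same route as the paper's own (very terse) proof: a natural level function is constant on each connected component of $\bar{K}_\Psi$, hence descends to a primitive of the signed length cocycle on the Reeb graph, while conversely an exact cocycle integrates to a vertex function that lifts to a potential. The details you supply---continuity of $F$ at the finite critical points, path-independence of the integrated vertex function, and the gluing across $K_\Psi$---are exactly the steps the paper declares ``immediate,'' and your writing $\Re\sqrt{\Psi}$ for the transverse increment (versus $\Im\sqrt{\Psi}$ in the definition of edge lengths) merely inherits the paper's own inconsistent convention and is harmless.
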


\begin{defi+}
Any non-chaotic quadratic differential satisfying the conditions of
Proposition \ref{prop:gradient} will be called \emph{gradient}, and  
any of the corresponding level functions $F$ will be called a {\em potential}.
\end{defi+}

Any potential of a gradient quadratic differential is constant on the
components of $\bKP$. 

\begin{proof}[Proof of Proposition \ref{prop:gradient}]
The claim that a potential defines an orientation on the edges of the Reeb graph
is immediate from the definition, as is the exactness of that cocycle.
Conversely, the exactness of the cocycle on the Reeb graph defined by the length
elements and orientations on the  edges allows one to integrate it to a
function on the Reeb graph, which lifts to a potential.
\end{proof}

\begin{lemma}\label{lm:grad} 
Levy form $\mu_F$ of any potential $F$ of a gradient quadratic differential $\Psi$ is supported on $K_\Psi \cup Cr_\Psi$.
\end{lemma}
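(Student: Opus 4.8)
The plan is to reduce the statement to the harmonicity of $F$ on the open set $Y\setminus(K_\Psi\cup Cr_\Psi)$. Recall from the definition of the Levy form that at a smooth point $\mu_F=\Delta F\,dx\wedge dy$, and that, viewed as a $2$-current, $\mu_F$ is obtained by integrating a test form against this density wherever $F$ is smooth. Hence $\mu_F$ restricts to the zero current on any open set on which $F$ is harmonic, and its support is, by definition, the complement of the largest such open set together with the non-smooth locus. So it suffices to show that $F$ is harmonic away from $K_\Psi\cup Cr_\Psi$.

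To see this, I would work in a canonical coordinate. Fix a connected component $U$ of $Y\setminus K_\Psi$ and a regular point $z_0\in U\setminus Cr_\Psi$. On $U$ the branch $\sPsi$ is a single-valued holomorphic $1$-form (up to the global sign), and near $z_0$ the canonical coordinate $w(z)=\int_{z_0}^{z}\sPsi$ satisfies $\sPsi=dw$. Since $F$ is a potential, the defining relation reads $dF=\pm\Re\sPsi=\pm\Re\,dw=\pm\,d(\Re w)$, so locally $F=\pm\Re w+c$ for some constant $c$. The essential point here is that a \emph{natural} level function selects a single sign on the whole component $U$: unlike the distance function of Proposition~\ref{prop:prepot}, it does not ``fold'' in the middle of strip or ring domains, and is therefore genuinely smooth across $U\setminus Cr_\Psi$. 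As the real part of the holomorphic function $w$, the function $\Re w$ is harmonic, and hence so is $F$; thus $\Delta F=0$ at every point of $Y\setminus(K_\Psi\cup Cr_\Psi)$.

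Consequently the smooth density $\Delta F\,dx\wedge dy$ of $\mu_F$ vanishes identically on the open set $Y\setminus(K_\Psi\cup Cr_\Psi)$, so $\mu_F$ restricts to the zero current there, which gives $\operatorname{supp}\mu_F\subseteq K_\Psi\cup Cr_\Psi$, as claimed.

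The computation is essentially immediate once one recalls that the real part of a holomorphic function is harmonic; the only place requiring a little care is the passage from the pointwise identity $\Delta F=0$ on the open complement to the statement about the support of the $2$-current $\mu_F$. This is legitimate precisely because no distributional mass can be concentrated in a region where the potential is smooth and harmonic: the action of $\mu_F$ on a test form supported inside $Y\setminus(K_\Psi\cup Cr_\Psi)$ is computed by the integral against the vanishing density $\Delta F$. I would therefore expect this current-theoretic bookkeeping, rather than the harmonicity itself, to be the one step worth stating explicitly.
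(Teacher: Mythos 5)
Your proof is correct and follows exactly the route of the paper, whose entire proof is the one-line observation that the restriction of the potential to each domain of $Y\setminus(K_\Psi\cup Cr_\Psi)$ is harmonic; you have merely spelled out the details (locally $dF=\pm\Re\sqrt{\Psi}=\pm\,d(\Re w)$ in a canonical coordinate $w$, so $F$ is the real part of a holomorphic function up to sign and constant, hence harmonic) together with the routine current-theoretic step that a smooth harmonic function contributes zero Levy mass. Both of these elaborations are accurate, including your point that a natural level function fixes a single branch of $\sqrt{\Psi}$ on each component of $Y\setminus K_\Psi$ and so does not fold as the distance function of Proposition~\ref{prop:prepot} does.
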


\begin{proof}
Indeed, the restriction of the potential to each of the domains in
$Y\setminus (K_\Psi\cup Cr_\Psi)$ is harmonic.
\end{proof}

The potential functions may fail to exist (for example, if the Reeb
graph has a loop). But their number is obviously finite (as we
can identify the potential functions with an element of the finite
set of orientations of the edges of the Reeb graph). In fact, more can
be said:

\begin{prop+}\label{prop:2^n} For a gradient differential $\Psi$, the
  number of different potentials (considered up to an additive
  constant) is either $0$ or a power of 2, comp. Theorem~4 of \cite{STT}. 
\end{prop+} 

\begin{proof}
  The group $\mathtt{Flips}=\Int_2^{E_\Psi}$ of flipping the
  orientations of the edges acts on the space of cochains on the Reeb
  graph by reflections. The collections of flips that preserve the
  subspace annihilating the cycles in the Reeb graph is, clearly, a
  subgroup in $\mathtt{Flips}$.
\end{proof}

Existence of the potential function poses further restrictions on the
local properties of the quadratic differential $\Psi$.

\medskip
Let $F$ be a potential for $\Psi$. We will refer to a pole of $\Psi$
as {\em $F$-clean} if it does not belong to the support of the Levy
measure $\mu_F$. 

\begin{lemma}
The order $r\geq 2$ of any $F$-clean pole is even.
\end{lemma}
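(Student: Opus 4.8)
The plan is to prove the contrapositive: a pole of odd order can never be $F$-clean. The whole mechanism is the monodromy of $\sqrt{\Psi}$ around a pole, so I first record the local normal form. Near a pole $P$ of order $r\ge 2$ I would pick a local coordinate $z$ with $P=\{z=0\}$ and write $\Psi=f(z)\,dz^2$ with $f(z)=c\,z^{-r}(1+O(z))$, $c\neq 0$. A local branch of the $1$-form $\sqrt{\Psi}$ then behaves like $\sqrt{c}\,z^{-r/2}(1+\cdots)\,dz$, so analytic continuation once around $P$ multiplies $\sqrt{\Psi}$ by $e^{-\pi i r}=(-1)^{r}$. Hence for odd $r$ the form $\sqrt{\Psi}$, and with it $\Re\sqrt{\Psi}$, has monodromy $-1$ on the punctured disc $U^{*}=U\setminus\{P\}$: there is no continuous single-valued choice of $\pm\Re\sqrt{\Psi}$ on $U^{*}$.

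Next I would unwind the definition of $F$-cleanness. Suppose $P\notin\operatorname{supp}\mu_F$, so that the $2$-current $\mu_F$ vanishes on some neighbourhood $U$ of $P$; equivalently $F$ is harmonic on $U$ in the distributional sense. In particular $\mu_F$ carries no mass on the germs at $P$ of the $r-2\ (\ge 1)$ critical trajectories entering $P$, so across each of them $F$ is $C^{1}$ and $dF$ extends to a single-valued smooth $1$-form on $U^{*}$. On each component of $U\setminus K_\Psi$ the potential satisfies $dF=\pm\,\Re\sqrt{\Psi}$ for one local branch of $\sqrt{\Psi}$; since the component of $\Re\sqrt{\Psi}$ transverse to a critical trajectory does not vanish (being, up to a unimodular factor, the differential of the transverse coordinate), a $C^{1}$ matching of $dF$ across the trajectory forbids the sign $\pm$ from jumping. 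Therefore a single sign works throughout, i.e. $dF=\varepsilon\,\Re\sqrt{\Psi}$ on all of $U^{*}$ for a fixed $\varepsilon\in\{\pm 1\}$.

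Finally I would read off the contradiction by comparing monodromies: $dF$ is the differential of the genuine single-valued function $F$ and is thus single-valued on $U^{*}$, whereas $\varepsilon\,\Re\sqrt{\Psi}$ has monodromy $-1$ there. This is impossible, so no odd-order pole is $F$-clean, which is exactly the claim that the order of every $F$-clean pole is even. The argument is genuinely specific to odd $r$: for even $r$ the monodromy is $+1$ and no contradiction arises, consistent with the fact that even-order poles can be clean. The step I expect to be the main obstacle is the second one, namely rigorously turning ``$\mu_F$ vanishes near $P$'' into the $C^{1}$ matching that prevents the sign of $\pm\Re\sqrt{\Psi}$ from flipping across the trajectories issuing from $P$; once that local regularity is secured the monodromy comparison closes the proof at once. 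A clean way to package this second step is to pass to the double cover $\zeta\mapsto z=\zeta^{2}$, on which $\sqrt{\Psi}$ is single-valued and the deck involution $\iota:\zeta\mapsto-\zeta$ sends $\sqrt{\Psi}$ to $-\sqrt{\Psi}$ while fixing $\tilde F=F\circ\pi$; cleanness forces the sign $\varepsilon$ to be a fixed constant near the branch point, whereas $\iota$-invariance of $\tilde F$ together with $\iota$-anti-invariance of $\sqrt{\Psi}$ forces $\varepsilon\circ\iota=-\varepsilon$, an immediate contradiction for a nonzero constant.
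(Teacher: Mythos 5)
Your proof is correct and is essentially the paper's own argument in expanded form: the paper disposes of this lemma in two lines by observing that the $\mathbb{Z}_2$-bundle of orientations defined by $\pm dF$ admits no section in a punctured vicinity of an odd-order pole, which is precisely your monodromy computation $(-1)^r$ for $\sqrt{\Psi}$ combined with the observation that cleanness forces $dF$ to be continuous (indeed harmonic, by Weyl's lemma) near the pole, so its sign cannot flip. Your careful second step and the double-cover packaging merely supply the regularity details the paper leaves implicit; the one quibble is your parenthetical justification of the no-sign-flip claim via the ``transverse component'' of $\Re\sqrt{\Psi}$, which tacitly uses the convention $dF=\pm\Im\sqrt{\Psi}$ (the paper itself wavers between $\Re$ and $\Im$ here), but this is harmless since either form is nonvanishing as a real $1$-form on the punctured neighbourhood, which is all the argument needs.
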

\begin{proof}
Indeed, the $\Int_2$-bundle of orientations defined by $\pm dF$ does
not admit a section in a (punctured) vicinity of a pole of $\Psi$ of
odd order. Hence $dF$ is discontinuous in an arbitrarily small neighbourhood  of the pole.  
\end{proof}

The $F$-clean poles of even order  exist. Moreover,  
\begin{prop+}
Let $F$ be a potential for a non-chaotic quadratic differential
$\Psi$. Then for  any $F$-clean pole $z_*$ of $\Psi$ of even order, 
 the residue of the $\sPsi$ (defined up to a sign near $z_*$) is
purely imaginary. 
\end{prop+}
\begin{proof}
The statement follows immediately from the fact that for a clean pole,
$F$ is smooth in a punctured vicinity of $z_*$, and therefore the
increment of the potential $F$ equals the residue of
$\sPsi$.
\end{proof}

\begin{lemma}\label{lm:restore}
A gradient differential $\Psi$ on a compact $Y$ is uniquely defined by its Levy form $\mu_F$ of any of its potentials $F.$
\end{lemma}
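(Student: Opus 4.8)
The plan is to invert the map $F\mapsto\mu_F$ explicitly at the level of the $1$-form $\sqrt{\Psi}$. The starting observation is that a potential reconstructs the differential by a pointwise formula: on each component of $Y\setminus K_\Psi$ the potential $F$ is harmonic and satisfies $dF=\Re\sqrt{\Psi}$, and since $\sqrt{\Psi}$ is holomorphic there, a holomorphic $1$-form with prescribed real part is recovered as $\sqrt{\Psi}=dF+i\ast dF=2\,\partial F$, where $\ast$ is the Hodge star of the conformal structure of $Y$. Consequently $\Psi=(dF+i\ast dF)^{\otimes2}=4(\partial F)^{\otimes2}$, and the whole differential $\Psi$ — including the location and orders of its poles, which are encoded in the singularities of $\partial F$ — is determined by $\partial F$ alone. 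Thus the lemma reduces to the claim that the Levy form $\mu_F=\Delta F$ determines $\partial F$.

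To prove this reduced claim I would compare two potentials. Suppose $F_1,F_2$ are potentials of gradient differentials $\Psi_1,\Psi_2$ with $\mu_{F_1}=\mu_{F_2}$, and set $G=F_1-F_2$, so that $\mu_G=\mu_{F_1}-\mu_{F_2}=0$ as currents on $Y$. The key point is that $\eta:=2\,\partial G=\sqrt{\Psi_1}-\sqrt{\Psi_2}$ is, by the previous paragraph and linearity, a \emph{meromorphic} $1$-form, while the vanishing of $\mu_G$ is exactly the statement $\bar\partial\eta=i\mu_G=0$ as a current on all of $Y$. By elliptic regularity (Weyl's lemma), a $\bar\partial$-closed current with locally integrable coefficients is a genuine holomorphic $1$-form; in particular $\eta$ has no poles. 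Finally $\Re\eta=dG$ is exact with single-valued primitive $G=F_1-F_2$, so $G$ extends to a globally defined harmonic function on the compact surface $Y$, hence is constant; therefore $dG=0$, $\eta=0$, and $\sqrt{\Psi_1}=\sqrt{\Psi_2}$, giving $\Psi_1=\Psi_2$.

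The main obstacle is the analytic bookkeeping at the infinite critical points, where the $F_i$ are unbounded and are not honest functions on $Y$. To run the Weyl-type argument one must first know that $G$ is locally integrable across the poles, so that $\partial G$ and the current equation $\bar\partial\eta=0$ make sense on all of $Y$; equivalently, one must verify that the equality $\mu_{F_1}=\mu_{F_2}$, read as currents, already forces the full principal parts of $\sqrt{\Psi_1}$ and $\sqrt{\Psi_2}$ to coincide at each pole. The cleanest route is a local computation near a pole: the residue of $\eta$ must be real for $G$ to be single-valued, while a nonzero real residue would contribute a logarithmic term and hence a point mass $2\pi\,\mathrm{Res}\,\eta$ to $\mu_G$, contradicting $\mu_G=0$; similarly each higher-order term of the principal part produces a nonzero multipole distribution in $\Delta G$ that $\mu_G=0$ excludes. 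Carrying out this local analysis and matching it with the global period conditions coming from exactness of $\Re\eta$ is where the real work lies; once it is complete, the reconstruction formula $\Psi=4(\partial F)^{\otimes2}$ closes the argument at once.
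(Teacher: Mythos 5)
Your argument is correct and essentially coincides with the paper's own proof: equality of the Levy forms gives $\Delta(F_1-F_2)=0$ as currents, hence $F_1-F_2$ is constant by harmonicity (Weyl's lemma) and compactness of $Y$, after which $dF=\pm\Re\sqrt{\Psi}$ recovers the locally defined holomorphic $1$-forms $\sqrt{\Psi_1},\sqrt{\Psi_2}$, which agree on an open set and hence everywhere by analytic continuation --- your reconstruction formula $\Psi=4(\partial F)^{\otimes 2}$ is just a global, sign-insensitive packaging of the same step. The bookkeeping at the infinite critical points that you flag in your final paragraph is not carried out in the paper either, which simply treats the potentials as $0$-currents with Levy forms well-defined as $2$-currents supported on $K_\Psi\cup Cr_\Psi$; so this is a shared implicit assumption rather than a gap relative to the paper's proof.
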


\begin{proof}
Two functions $F_1$ and $F_2$ (considered as 0-currents) have the same
Levy forms (considered as 2-currents) only if the difference
$F_1-F_2$ has vanishing Laplacian, and hence, by compactness of $Y$,
is a constant.

Now, if two gradient quadratic differentials $\Psi_1, \Psi_2$ have
corresponding potentials $F_1, F_2$ coinciding (up to a constant) on
an open subset of $Y$, then the (locally defined) holomorphic 1-forms
$\sPsi_1, \sPsi_2$ have identical real parts (equal to $dF_1, dF_2$,
respectively) on the same subset, and hence coincide everywhere.

\end{proof}

\section{Levy measures and Positivity}\label{sec:pos}
In this section we discuss the notion of positivity
for  gradient  quadratic differentials. Observe that for any potential
function $F$, its Levy form $\mu_F$ is an exact $2$-current on $Y$,
i.e., $\int_Y\mu_F=0$. Many applications in asymptotic analysis lead
to the situation when a gradient
differential has a potential $F$ whose Levy form is a signed measure whose positive part is 
supported on $K_\Psi$, and whose negative part is supported on ($F$-clean)
poles of $\Psi$. (We discuss an example in \S~\ref{sec:HS}.)   

\begin{defi+} We will call {\em positive} a clean potential $F$ such that
  the restriction of $\mu_F$ to $K_\Psi$ is a positive
  measure. A quadratic potential admitting a positive potential will
  also be referred to as positive. 
\end{defi+}
We remark that the notion of positivity depends only on the potential
function $F$ but not on the particular coordinate chart.

Whether or not a potential of a gradient quadratic differential $\Psi$ is positive,
depends not only on its Reeb graph, but also on the structure of fat graphs
 for the components $K_\alpha$ corresponding to the vertices
of the Reeb graph.

Specifically, each edge $e^\alpha_\beta$ of the fat graph $K_\alpha$
is adjacent to one or two boundary components (corresponding to the
orbits of the two flags incident to the edge under the action of permutation 
$\sigma_0\sigma_1$ defining the fat graph structure). The boundary
components of the fat graph $K_\alpha$ correspond to the edges of
$\rg_\Psi$. We will be calling these edges of
the Reeb graph \emph{incident to the corresponding edge} $e^\alpha_\beta$ of
the fat graph $K_\alpha$.

\begin{lemm+}\label{lem:positive}
The potential $F$ of a gradient quadratic differential is positive if and only if 
for any edge $e^\alpha_\beta$ of any of the fat graphs $K_\alpha$,
the orientation of the Reeb graph defined by $F$ has at least one of
the (at most two) incident edges of the Reeb graph  oriented
towards $K_\alpha$. 
\end{lemm+}
\begin{proof}
An immediate local computation shows that if the edges incident to
$e_\beta^\alpha$ are both oriented away from $K_\alpha$, the
corresponding measure on the edge equals $-2|d\Im\sPsi|$; if one edge is
oriented towards, and one away from $K_\alpha$, then $\mu_F=0$ near
the edge, and if incident edges are oriented towards  $K_\alpha$, then $\mu_F=2|d\Im\sPsi|$.
\end{proof}

\begin{figure}[htp]
\begin{center}
\includegraphics[width=.7\textwidth]{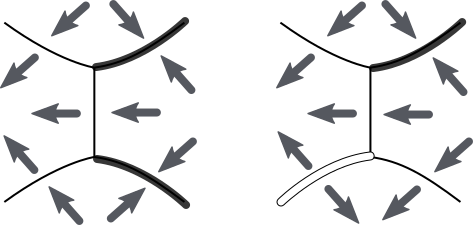}
\caption{The figure showing how the orientation of the {\em Reeb}
  graph affects the positivity on a component of $K_\Psi$: on the left
  display, all edges of the fat graph component have the Reeb graph
  orientation pointing toward them, or ``through'' them; on the right
  display, the SW edge has both adjacent edges of the Reeb graph
  outgoing, resulting in locally negative mass. (Positive charges are
  shown as fat solid lines, negative, - as fat hollow line.)}
\label{fig:orientation}
\end{center}
\end{figure}

We remark that  Lemma \ref{lem:positive} turns the computational
question of the positivity of a given gradient quadratic differential
$\Psi$ into an instance of a $2$-satisfiability problem, \cite{moore}. Indeed, one can
interpret the orientations of the edges of the Reeb graph as
Boolean variables, and the absence of two outgoing edges of the Reeb
graph incident to an edge of a fat graph $K_\alpha$ as a
$2$-clause. Such interpretation implies that given the fat graph
structures, the positivity  can be efficiently decided  in time quadratic in the number of
the critical points of $\Psi$.

The natural length element $|d\Im\sPsi|$ on the edges of the fat
graphs $K_\alpha$'s (or critical trajectories of $\Psi$) defines the
{\em widths} on the boundary components of the fat graphs $K_\alpha$,
or, equivalently, on the edges of the Reeb graph $\rg_\Psi.$ (Recall
that the
{\em lengths} of the edges of the Reeb graph are defined by
$|d\Re\sPsi|$.) For the components
containing poles of $\Psi$ in their closure, the width can be infinite.

\medskip
Next result is immediate:

\begin{lemm+}\label{lm:mass}
The total mass of $\mu_F$ supported by a component $K_\alpha$ equals 
to the difference of the widths of all incoming and all outgoing components.
\end{lemm+}

Lemma~\ref{lm:mass} implies the following necessary condition of positivity:
\begin{coro+}
If a potential $F$ is positive, then for any component $K_\alpha$, the total
width of the incoming edges is greater than or equal to the total width of the
outgoing edges.
\end{coro+}

It is worth mentioning  that the latter condition is not sufficient: just
fixing the Reeb graph of a gradient quadratic differential, and widths and orientations of its edges is
not enough to determine the positivity of the corresponding potential. Indeed, 
the Dehn twists acting on the space of the corresponding complex
structures of the Riemann surface $Y$ do not preserve positivity,  see
Fig.~\ref{fig:nopreserve}.

\begin{figure}[htp]
\begin{center}
\includegraphics[width=.9\textwidth]{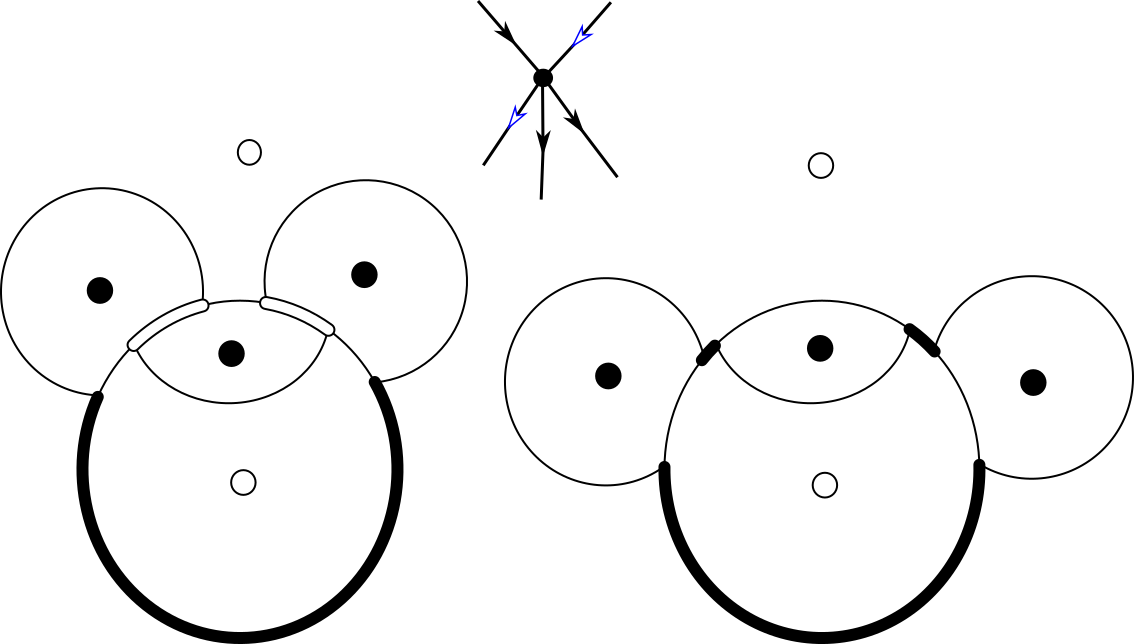}
\caption{Singular sets and trajectories of clean quadratic differ-
  entials on $\CP^1$ with 5 poles of order 2, and 6 simple zeros each. The
  residues of the poles (equal to the widths of the circle domains
  centered at these poles) are the same for corresponding poles on the
  left and on the right pictures. Yet the potentials on the left has
  negative components of $\mu_F$; the one on the right is
  positive. The Reeb graph is sketched on the top. (We keep the
  convention that positive masses are shown as solid lines or dots;
  negative, - as hollow ones.)}
\label{fig:nopreserve}
\end{center}
\end{figure}

Another constraint on the orientation of the edges of the Reeb graph
required by the positivity of a potential comes from the {\em simple
  poles} of $\Psi$, see Fig.~\ref{fig:pole}. As the edge of the fat graph adjacent to a simple
pole has the same domain on both sides, the positivity implies that
the orientation of the edge of the Reeb graph should be {\em
  incoming}, comp. Proposition~2, \cite{STT}.

\begin{figure}[htp]
\begin{center}
\includegraphics[width=1in]{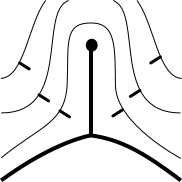}
\caption{Positivity forces the orientation of the potential function
  near a simple pole of $\Psi$.}
\label{fig:pole}
\end{center}
\end{figure}

\section{Appendix I. Quadratic differentials and Heine-Steiltjes theory}\label{sec:HS}

We have earlier encountered Strebel and gradient differentials  in the study of the  asymptotic properties of Van Vleck and Heine-Stieltjes polynomials and solutions of Schr\"odinger equation with polynomial potential, see \cite{HoSh, STT, ShQ}.  Some of these results are presented below and they were a major motivation for the present study.

\medskip
Given a pair of polynomials $P(z)$ and $Q(z)$ of degree $m$ and at most $m-1$ respectively,   consider the differential equation:
\begin{equation}\label{eq:HS}
P(z)S^{\prime\prime}(z)+Q(z)S'(z)+V(z)S(z)=0.
\end{equation}

\medskip
The classical Heine-Stieltjes problem for  equation (\ref{eq:HS}) asks for any positive integer $n,$ to find the set of all possible polynomials $V(z)$ of degree at most $m-2$ such that  (\ref{eq:HS})  has a polynomial solution $S(z)$ of degree $n$, see \cite{He}, \cite{Sti}.  Already  E.~Heine proved that for a generic equation   (\ref{eq:HS}) and any positive $n,$ there exist $\binom {n+l-2}{l-2}$ polynomials $V(z)$ of degree $l-2$ having the corresponding polynomial solution $S(z)$ of degree $n$. Such polynomials $V(z)$ and $S(z)$ are referred to as {\em Van Vleck} and {\em Heine-Stieltjes} polynomials respectively.  The following localization result for the zero loci of $S(z)$ and $V(z)$ was proven in  \cite{Sh}. 

\begin{proposition}\label{th:loc} For any $\epsilon >0,$ there exists $N_\epsilon$ such that all roots of 
$V(z)$ and its corresponding $S(z)$ lie within  $\epsilon$-neighbourhood of $Conv_P$ if $\deg S(z)\ge N_\epsilon$. Here $Conv_P$ stands for the convex hull of the zero locus of the leading coefficient $P(z)$.
\end{proposition}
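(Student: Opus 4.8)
The plan is to combine the classical Stieltjes electrostatic description of the zeros of $S$ with a passage to the limit of the normalised zero-counting measures, the latter being governed precisely by a quadratic differential of the type studied in \S\ref{sec:gradient}. Writing $P(z)=p_m\prod_{i=1}^m(z-a_i)$ and $Q/P=\sum_i\rho_i/(z-a_i)$ with $\rho_i=Q(a_i)/P'(a_i)$, I would first record that the (simple, and distinct from the $a_i$) zeros $\zeta_1,\dots,\zeta_n$ of a Heine--Stieltjes solution $S$ of \eqref{eq:HS} satisfy the equilibrium relations
\[
\sum_{l\neq k}\frac{1}{\zeta_k-\zeta_l}=-\frac12\sum_{i=1}^m\frac{\rho_i}{\zeta_k-a_i},\qquad k=1,\dots,n,
\]
obtained by evaluating \eqref{eq:HS} at $\zeta_k$ and using $S''(\zeta_k)/S'(\zeta_k)=2\sum_{l\neq k}(\zeta_k-\zeta_l)^{-1}$ (the confluent case of multiple zeros of $P$ being recovered by continuity). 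These relations already reveal the confining mechanism: if $\zeta^{\ast}$ is the zero farthest from $Conv_P$, at distance $d$, and one rotates coordinates so that the outer normal to $Conv_P$ at the nearest point is the positive real direction, then $\Re(\zeta^{\ast}-\zeta_l)\ge 0$ for all $l$, so the left-hand side has nonnegative real part, whereas the right-hand side is $O(1/d)$ because $|\zeta^\ast-a_i|\ge d$. Thus the net Coulomb force on the outermost zero points outward and is weak --- exactly the imbalance that drives the zeros back towards $Conv_P$.

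The part I expect to be the main obstacle is that this real-part inequality only gives $0\le\Re(\mathrm{LHS})\le C/d$ and does not by itself bound $d$, since a cluster of zeros sitting far out near $\zeta^\ast$ contributes a negligible outward force. Two substantive analytic inputs are therefore needed. The first, which I treat as the crux, is an a priori bound uniform in $n$ confining all zeros to a fixed compact set (ruling out escape to infinity); I would establish it separately, following \cite{Sh, MFR1}. Granting it, I would pass to the normalised counting measure $\nu_n=\frac1n\sum_k\delta_{\zeta_k}$, whose Cauchy transform is $\tfrac1n S'/S$. Rewriting \eqref{eq:HS} as the Riccati identity $P\big((S'/S)'+(S'/S)^2\big)+Q\,(S'/S)+V=0$, dividing by $n^2$, and using that the top coefficient of $V$ equals $-p_m n(n-1)-q_{m-1}n\sim -p_m n^2$ so that $\widetilde V:=\lim V/n^2$ exists along subsequences, every weak-$*$ limit $\nu$ of the $\nu_n$ has Cauchy transform $w$ satisfying
\[
P(z)\,w(z)^2+\widetilde V(z)=0,\qquad\text{i.e.}\qquad w=\pm\sqrt{-\widetilde V/P}.
\]

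In the language of this paper, $w=\pm\sqrt{-\widetilde V/P}$ means that the logarithmic potential $\lim\frac1n\log|S|$ is a potential of the gradient quadratic differential $\Psi_\infty=\frac{\widetilde V}{P}\,dz^2$ on $\CP^1$, and by Lemma~\ref{lm:grad} the limit measure $\nu$ (a constant multiple of the Levy form of that potential) is carried by $K_{\Psi_\infty}\cup Cr_{\Psi_\infty}$. The second analytic input is the confinement $K_{\Psi_\infty}\cup Cr_{\Psi_\infty}\subset Conv_P$, equivalently that both $\mathrm{supp}\,\nu$ and the zeros of $\widetilde V$ lie in $Conv_P$; I would obtain it by a deformation/continuity argument connecting the given data to the classical Stieltjes case of real positive $\rho_i$ --- where $\mathrm{supp}\,\nu\subset Conv_P$ is elementary --- and tracking the support throughout, as in \cite{MFR1, MFR2}. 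With both inputs in place the proposition follows: no subsequential limit places mass at distance $>\epsilon$ from $Conv_P$, so for all large $n$ every zero of $S$ except a set of $o(n)$ stragglers lies in the $\epsilon$-neighbourhood of $Conv_P$, and the stragglers are eliminated by re-running the real-part estimate, which now closes quantitatively: all but $o(n)$ of the sources $\zeta_l$ already lie near the hull and exert on a would-be far zero an outward force of order $n$ against an external field of order $1$ (here the a priori bound $d\le R$ keeps the geometric factors controlled). Finally, the zeros of $V$ are localised at once: since $V/n^2\to\widetilde V$ with all zeros of $\widetilde V$ in $Conv_P$ and leading coefficient tending to $-p_m\neq0$, Hurwitz's theorem places the zeros of $V$ within $\epsilon$ of $Conv_P$ for all large $n$.
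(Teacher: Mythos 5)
First, a point of reference: the paper contains no proof of Proposition~\ref{th:loc} at all --- it is stated as a quotation from \cite{Sh} (``was proven in \cite{Sh}''), so there is no internal argument to match your proposal against; it must stand on its own. It does not. The two steps you yourself flag as ``analytic inputs'' are precisely the load-bearing content of the proposition, and both are left as black boxes. Input (1), the uniform a priori confinement of all zeros to a fixed compact set, you propose to establish ``following \cite{Sh}'' --- but \cite{Sh} is the very paper whose theorem you are trying to reprove, so as a blind proof this is circular; moreover you need the same a priori control for the roots of $V_n$ (not only of $S_n$) before you can even extract subsequential limits $\tilde V=\lim V_n/n^2$, since the normalisation argument via the leading coefficient $-p_mn(n-1)-q_{m-1}n$ controls one coefficient, not the polynomial. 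Input (2) is the more serious gap: the containment of $\mathrm{supp}\,\nu$ and of the zeros of $\tilde V$ in $Conv_P$ is exactly the limiting form of the statement being proved, and the mechanism you offer --- a deformation/continuity argument from the classical Stieltjes case with positive real $\rho_i$, ``tracking the support throughout'' --- has no justification and is unlikely to be repairable as stated. For fixed $n$ the Van Vleck polynomials form a discrete set undergoing monodromy as the $\rho_i$ are deformed, the subsequential limit pairs $(\tilde V,\nu)$ are not known a priori to depend continuously (or even to form a connected family) in the deformation parameter, and the trajectory structure of the quadratic differentials $\tilde V\,dz^2/P$ is notoriously unstable under perturbation, so ``the support cannot jump outside $Conv_P$'' is exactly what would need proof. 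Nothing in \cite{MFR1,MFR2} is a routine citation for this.

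A secondary but genuine defect: from the a.e.\ algebraic relation $\mathcal{C}_\nu^2=-\tilde V/P$ you invoke Lemma~\ref{lm:grad} to conclude that $\nu$ is carried by $K_{\Psi_\infty}\cup Cr_{\Psi_\infty}$. But that lemma applies to a potential of a differential already known to be \emph{gradient}, with the piecewise structure $dF=\pm\Re\sqrt{\Psi}$ holding domainwise; deducing this local structure (finitely many analytic arcs that are trajectories) from an a.e.\ equation for the Cauchy transform is itself a nontrivial theorem, of the type proved in \cite{BB} and \cite{MFR1}, not a formal consequence. On the positive side, your electrostatic bookkeeping is sound where it is explicit: the equilibrium relations, the supporting-line estimate $\Re\bigl(\bar u(\zeta^\ast-\zeta_l)\bigr)\ge 0$ for the farthest zero, the observation that zeros of $S$ outside $Conv_P$ are automatically simple (if $S(\zeta^\ast)=S'(\zeta^\ast)=0$ with $P(\zeta^\ast)\neq0$, the equation \eqref{eq:HS} forces $S\equiv0$), and the final quantitative ``straggler'' step, which does close once inputs (1) and (2) are granted. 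So the skeleton is coherent, but the two inputs are the theorem, and the route proposed for the second one would fail.
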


The above localization result implies that there exist plenty of converging subsequences $\{\tilde V_n(z)\}$ where $V_n(z)$ is some Van Vleck polynomial for equation (\ref{eq:HS})  whose Stieltjes polynomial $S_n(z)$ has degree $n$ and $\tilde V_n(z)$ is the  monic polynomial proportional  to $V_n(z)$. (Convergence is understood coefficient-wise.) 

Recall that   the Cauchy transform $\C_\nu(z)$ and the logarithmic potential $u_\nu(z)$ of a (complex-valued) measure $\nu$ supported in $\bC$ are by definition given by: 
$$\C_\nu(z)=\int_{\bC}\frac{d\nu(\xi)}{z-\xi}\quad\text{    and    }\quad u_\nu(z)=\int_{\bC}\log|z-\xi|{d\nu(\xi)}.$$
Obviously, $\C_\nu(z)$ is analytic outside the support of $\nu$ and
has a number of important properties, e.g. that
$$\C_\nu(z)=\frac{\partial u_\nu(z)}{\partial z};\quad\quad \nu=\frac{1}{\pi}\frac{\partial\C_\nu(z)}{\partial \bar z}$$ where the derivative is understood in
the distributional sense.  Detailed information about Cauchy transforms
can be found in e.g. \cite{Ga}.

\begin{theorem}\label{th:asym} Choose a sequence $\{V_n(z)\}$  of Van Vleck polynomials  ï¿½
where $\deg S_n(z)=n$ with converging sequence $\{\tilde V_n(z)\}\to \tilde V(z)$.  Then the sequence of root-counting measures $\mu_n$ of $S_n(z)$ weakly converges to the probability measure $\mu$ whose Cauchy transform $\CC_\mu(z)$ satisfies a.e. in $\bC$ the algebraic equation 
$$\CC_\mu^2(z)=\frac{\tilde V(z)}{P(z)}.$$
Moreover, the logarithmic potential $u_\mu(z)$ of $\mu$ has the property that its set of level curves coincides with the set of closed trajectories of the quadratic differential $-\frac{\tilde V(z)dz^2}{P(z)}$ which is therefore Strebel.
\end{theorem}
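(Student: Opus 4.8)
The plan is to pass from the linear equation (\ref{eq:HS}) to a scalar Riccati-type relation for the Cauchy transforms $\mathcal{C}_{\mu_n}$ of the root-counting measures $\mu_n$ of $S_n$, and then let $n\to\infty$. Since $\mathcal{C}_{\mu_n}=\tfrac1n\frac{S_n'}{S_n}$ and $\frac{S_n''}{S_n}=\big(\frac{S_n'}{S_n}\big)'+\big(\frac{S_n'}{S_n}\big)^2$, dividing (\ref{eq:HS}) by $n^2 S_n$ gives
$$
P\left(\mathcal{C}_{\mu_n}^2+\tfrac1n\,\mathcal{C}_{\mu_n}'\right)+\tfrac1n\,Q\,\mathcal{C}_{\mu_n}+\tfrac1{n^2}V_n=0.
$$
Comparing the coefficients of the top-degree term $z^{m+n-2}$ in (\ref{eq:HS}) shows that the leading coefficient of $V_n$ grows like $n^2$ times that of $P$; with the normalisation of $\tilde V$ that makes both sides of the sought relation decay like $z^{-2}$ at infinity (consistent with $\mathcal{C}_\mu(z)\sim 1/z$), this yields $V_n/n^2\to-\tilde V$.

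For the limit I would first secure compactness: by the localisation Proposition \ref{th:loc}, for large $n$ all zeros of $S_n$ lie in a fixed neighbourhood of $Conv_P$, so the $\mu_n$ have uniformly compact support and the $\mathcal{C}_{\mu_n}$ are uniformly bounded analytic functions on each compact subset of the complement, hence a normal family. Cauchy estimates then force $\tfrac1n\mathcal{C}_{\mu_n}'\to0$ and $\tfrac1n\mathcal{C}_{\mu_n}\to0$ locally uniformly off $Conv_P$, so every locally uniform subsequential limit $\mathcal{C}$ satisfies $P\,\mathcal{C}^2=\tilde V$, i.e.
$$
\mathcal{C}^2=\frac{\tilde V}{P}.
$$
Recovering the measure by $\mu=\tfrac1\pi\,\partial\mathcal{C}/\partial\bar z$ and using that $\mathcal{C}(z)\sim1/z$ singles out the correct branch near infinity, I would argue that all subsequential limits coincide, so $\mu_n\to\mu$ weakly with $\mathcal{C}_\mu^2=\tilde V/P$. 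I expect this to be the main obstacle: the algebraic relation only determines $\mathcal{C}_\mu$ up to the two sheets of $\sqrt{\tilde V/P}$, and pinning down on which arcs the sheets are glued --- equivalently, locating $\mathrm{supp}\,\mu$ and proving independence of the subsequence --- is the delicate point, where one leans on the positivity of the $\mu_n$ (and, if necessary, on the cited electrostatic results).

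With $\mathcal{C}_\mu^2=\tilde V/P$ established, I turn to $\Psi=-\frac{\tilde V}{P}dz^2$. Its square root is the meromorphic $1$-form $\sqrt{\Psi}=i\,\mathcal{C}_\mu\,dz$, and since $\mathcal{C}_\mu=\partial u_\mu/\partial z$ with $u_\mu$ real we get $du_\mu=\mathcal{C}_\mu\,dz+\overline{\mathcal{C}_\mu\,dz}=2\Re(\mathcal{C}_\mu\,dz)=2\,\Im\sqrt{\Psi}$. Along a horizontal trajectory $\Psi>0$, so $(\mathcal{C}_\mu dz)^2<0$, $\mathcal{C}_\mu dz$ is purely imaginary and $du_\mu=0$; hence $u_\mu$ is constant on horizontal trajectories and its level curves are exactly the horizontal trajectories of $\Psi$. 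In other words $u_\mu$ is a piecewise harmonic level function of $\Psi$, and because $\mu=\tfrac1\pi\,\partial_{\bar z}\mathcal{C}_\mu=\tfrac1{4\pi}\Delta u_\mu$, its Levy form is precisely $\mu$, which by Lemma \ref{lm:grad} (and directly, since $u_\mu$ is harmonic on each trajectory domain) is supported on $K_\Psi\cup Cr_\Psi$.

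It remains to show that $\Psi$ is Strebel, i.e. that the closed trajectories fill $\bCP^1$ up to measure zero. Here I use that $u_\mu$ is single-valued and harmonic off the compact set $\mathrm{supp}\,\mu$ and that $u_\mu(z)=\log|z|+O(1/|z|)\to+\infty$ as $z\to\infty$. For any regular value $c$ (all but the finitely many saddle values taken at the zeros of $\tilde V$), the set $\{u_\mu=c\}$ is bounded --- it cannot reach $\infty$, where $u_\mu\to\infty$ --- avoids $K_\Psi$, and contains no critical point, so it is a smooth compact $1$-manifold without boundary, i.e. a finite union of closed Jordan curves. Thus every trajectory at a regular level is closed, and these sweep out $\bCP^1\setminus K_\Psi$ as $c$ varies, a set of full measure; the non-closed trajectories are the finitely many critical ones. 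In particular no strip or end domain occurs: such a domain would force regular trajectories running to the only higher-order pole, at $z=\infty$, whereas there $\Psi\sim-dw^2/w^2$ in $w=1/z$ has negative leading coefficient (purely imaginary residue of $\sqrt{\Psi}$, cf. Lemma \ref{lemma1}), giving a circle domain. Hence the domain configuration consists of circle and ring domains only, the level curves of $u_\mu$ coincide with the closed trajectories of $\Psi$, and $\Psi$ is Strebel.
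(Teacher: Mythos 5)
The paper itself offers no proof of Theorem~\ref{th:asym}: it is recalled in the appendix as a motivating result from the authors' earlier Heine--Stieltjes work (see \cite{Sh, STT}), so your attempt can only be measured against the standard argument of that literature, whose route you do follow: the Riccati relation for $\CC_{\mu_n}=\frac1n S_n'/S_n$, the leading-coefficient computation giving $V_n/n^2\to-\tilde V$, compactness of supports via Proposition~\ref{th:loc}, and the final identification of the level curves of $u_\mu$ with closed trajectories via $du_\mu=2\Re(\CC_\mu\,dz)$ and $u_\mu\to+\infty$ at infinity. That last part of your write-up is sound, including the circle-domain analysis at $\infty$.

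The genuine gap sits exactly where you flag it, and flagging is not closing. First, your normal-family argument establishes $P\,\CC^2=\tilde V$ only locally uniformly off a neighbourhood of $Conv_P$; analytic continuation then extends it only to the unbounded component of $\bC\setminus\mathrm{supp}\,\mu$, and it cannot reach the bounded components, where the branch of $\sqrt{\tilde V/P}$ may switch. To get ``a.e.\ in $\bC$'' as the theorem asserts, one must pass to the limit distributionally: extract $\mu_{n_k}\to\mu$ weak-$*$ (uniformly compact supports), use that then $\CC_{\mu_{n_k}}\to\CC_\mu$ in $L^1_{loc}$ and, along a further subsequence, a.e., which handles the nonlinear term $\CC_{\mu_n}^2$, while $\frac1n\CC_{\mu_n}'\to0$ and $\frac1n Q\,\CC_{\mu_n}\to0$ in the sense of distributions. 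None of this machinery appears in your proposal. Second, the independence of the subsequence is left to ``positivity and the cited electrostatic results''; but the tools are precisely the paper's own quoted statements, and the argument should be spelled out: any subsequential limit is a real, compactly supported probability measure satisfying \eqref{eq:imp}, so by Theorem~\ref{th:charac}(1) the differential $-\tilde V dz^2/P$ is Strebel, and then Theorem~\ref{pr:positive} says at most one \emph{positive} measure satisfies \eqref{eq:imp}; since every subsequential limit is positive, they all coincide and the full sequence converges. Note this also repairs a third soft spot in your Strebel argument: you need $\mathrm{supp}\,\mu$ to be Lebesgue-null (it lies in $K_\Psi$, a finite union of analytic arcs, by Theorem~\ref{th:charac}(2)) before the sweep of regular level curves can be said to fill $\bCP^1$ up to measure zero; a priori a positive measure whose Cauchy transform satisfies an algebraic equation a.e.\ could have fat support, and ruling this out is nontrivial (cf.~\cite{BB}).
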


The Theorem~\ref{th:asym} implies further results for arbitrary
rational Strebel differentials with a second order pole at
$\infty$. (These statements are special cases of the results in \S~\ref{sec:pos}.)

\medskip
\begin{theorem}[see  Theorem~4, \cite {STT}] \label{th:charac} Let $U_1(z)$ and $U_2(z)$ be arbitrary  monic complex polynomials
with $\deg U_2 -\deg U_1=2$. Then 
\begin{enumerate}
\item
the  rational quadratic differential
$\Psi=-{U_1(z)}dz^2/{U_2(z)}$ on $\bCP^1$ is Strebel if and only if   
there exists a real and compactly supported in $\bC$ measure
$\mu$ of total mass $1$ (i.e. $\int_{\bC}d\mu=1$)
 whose Cauchy transform $\C_{\mu}$ satisfies a.e. in $\bC$ the equation: 
\begin{equation}\label{eq:imp}
\C_{\mu}^2(z)={U_1(z)}/{U_2(z)}.
\end{equation}

\item  for any $\Psi$ as in $\rm(1)$ there exists exactly $2^{d-1}$  real measures whose Cauchy transforms  satisfy \eqref{eq:imp} a.e. and whose support is contained in $K_\psi$ where $d$ is the total number of connected components in $\bCP^1\setminus K_\Psi$ (including the unbounded component, i.e. the one containing $\infty$). These measures are in $1-1$-correspondence with $2^{d-1}$ possible choices of the branches of $\sqrt{{U_1(z)}/{U_2(z)}}$  in the union of $(d-1)$ bounded components of $\bCP^1\setminus K_\Psi$. 

\end{enumerate} 
\end{theorem}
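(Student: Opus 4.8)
The plan is to use the logarithmic potential $u_\mu$ as the bridge between the analytic equation \eqref{eq:imp} and the trajectory geometry of $\Psi$, and then to restate the whole theorem as an assertion about potentials in the sense of Section~\ref{sec:gradient}. From the relations $\C_\mu=\partial_z u_\mu$ and $\mu=\tfrac1\pi\partial_{\bar z}\C_\mu$ recalled above one gets $du_\mu=2\,\Re(\C_\mu\,dz)$, and $\C_\mu^2=U_1/U_2$ identifies $\C_\mu\,dz$ with a branch of $\sqrt{-\Psi}$. Hence $du_\mu$ vanishes exactly along the horizontal trajectories of $\Psi$, so $u_\mu$ is a level function (non-constant on open sets, since $\Re(\C_\mu\,dz)\ne0$ off the trajectories), consistently with Theorem~\ref{th:asym}; moreover $\mu$ is, up to a positive universal constant, the Levy form $\mu_{u_\mu}$ of $u_\mu$. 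Thus the real measures solving \eqref{eq:imp} are precisely the normalised Levy forms of potentials of $\Psi$.

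For part (1), the implication $(\Leftarrow)$ would run as follows. Given such a $\mu$, the function $u_\mu$ is a level function, so $\Psi$ is non-chaotic by Lemma~\ref{lm:non-chaotic}. Since $\mu$ is real, compactly supported and of mass one, $\C_\mu\sim1/z$ at infinity and $\sqrt{-\Psi}=\C_\mu\,dz\sim dz/z$; in the coordinate $t=1/z$ this exhibits a double pole of $\Psi$ at $\infty$ with local model $-dt^2/t^2$, whose trajectories are concentric circles, so $\infty$ lies in a circle domain. Every other critical point is finite (the zeros of $U_1$ and the simple poles at the zeros of $U_2$), hence there is no further infinite critical point to bound a strip or end domain; by Jenkins' Basic Structure Theorem only ring and circle domains remain, and $\Psi$ is Strebel. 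For $(\Rightarrow)$, since $\bCP^1$ is simply connected the Reeb graph of a Strebel $\Psi$ is a tree, so the cocycle condition of Proposition~\ref{prop:gradient} holds automatically and $\Psi$ is gradient; taking a potential $F$, Lemma~\ref{lm:grad} puts $\mu_F$ on $K_\Psi$, which is compact in $\bC$ because $\infty$ sits in a circle domain. Normalising $F$ so that $\partial_z F$ is the branch of $\sqrt{U_1/U_2}$ that is $\sim1/z$ at $\infty$ and taking $\mu$ to be the appropriate positive multiple of $\mu_F$ yields a real, compactly supported measure with $\C_\mu^2=U_1/U_2$ whose mass, read off from the $1/z$-coefficient at $\infty$, equals $1$.

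For part (2), I would realise the claimed bijection with branch choices directly. The branch points of $\sqrt{U_1/U_2}$---the odd-order zeros of $U_1$ and the simple poles at odd-order zeros of $U_2$---all lie on $K_\Psi$, so on each of the $d$ components of $\bCP^1\setminus K_\Psi$ the function $\sqrt{U_1/U_2}$ has two single-valued branches. A choice of one branch per component defines a function $\C$ with $\C^2=U_1/U_2$ and $\C=O(1/z)$ at infinity; as $\bar\partial\C=\pi\nu$ for a compactly supported $\nu$, uniqueness of the Cauchy transform gives $\C=\C_\nu$. Along each arc of $K_\Psi$---a trajectory, on which $\sqrt{U_1/U_2}\,dz=\sqrt{-\Psi}$ is purely imaginary---the jump of $\C$ between the adjacent branches is $0$ or $\pm2\sqrt{U_1/U_2}$, so $\nu=\tfrac1\pi\bar\partial\C$ is a real measure supported on $K_\Psi$, of mass $1$ independently of the choices in the bounded components. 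Fixing the branch in the unbounded component to secure mass $+1$ leaves exactly the $2^{d-1}$ choices in the $d-1$ bounded ones; distinct choices produce distinct jumps, hence distinct measures, and conversely any admissible $\mu$ must coincide with one of these branches on each component. This sharpens the power-of-two count of Proposition~\ref{prop:2^n} to the exact value $2^{d-1}$.

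The main obstacle I anticipate is the implication $(\Leftarrow)$ of part (1): turning the analytic hypotheses on $\mu$ into the dynamical conclusion that $\bCP^1\setminus K_\Psi$ contains no strip or end domain. Everything hinges on pinning down the local picture at $\infty$---showing that real, compactly supported and mass one together force the circle-domain model $-dt^2/t^2$ rather than an end-domain model---and on confirming that $\infty$ is the only infinite critical point. A second delicate point, shared by both parts, is the realness of the jump of $\C$ across the arcs of $K_\Psi$: it is exactly the fact that these arcs are horizontal trajectories, along which $\sqrt{-\Psi}$ is imaginary, that guarantees $\nu$ is a genuine signed measure rather than a current of higher order.
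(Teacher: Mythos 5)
The paper itself does not prove Theorem~\ref{th:charac}: it is quoted verbatim from Theorem~4 of \cite{STT} as motivating background in Appendix~I, so there is no in-paper proof to compare against. Judged on its own merits, your proposal reconstructs what is essentially the expected argument (and it is consistent with the machinery of \S\S~\ref{sec:gradient}--\ref{sec:pos}: level functions, Lemma~\ref{lm:non-chaotic}, Proposition~\ref{prop:gradient}, the Reeb-graph-is-a-tree observation, and the Plemelj-type identification of $\mu$ with the jump of $\C_\mu$ across arcs of $K_\Psi$). Part (2) in particular is in good shape: the branch-per-component construction, the realness of the jump $\tfrac{1}{\pi i}\sqrt{U_1/U_2}\,dz$ along horizontal trajectories, the normalisation at $\infty$ fixing the unbounded component, and the completeness argument via the identity theorem are all the right steps.

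However, there are two genuine gaps. First, in the implication $(\Leftarrow)$ of part (1) — which you yourself flag as the main obstacle — the step ``$\C_\mu^2=U_1/U_2$ a.e.\ implies $u_\mu$ is a level function'' does not follow as stated. The hypothesis is an identity almost everywhere with respect to area, which gives no control of $u_\mu$ along any individual trajectory meeting $\mathrm{supp}\,\mu$, and a priori $\mathrm{supp}\,\mu$ need not be a union of analytic arcs at all ($\C_\mu$ could switch branches on an interleaved set of positive measure). One needs the structural result that a real measure whose Cauchy transform satisfies such an algebraic equation a.e.\ has support a finite union of analytic arcs which, precisely because $\mu$ is real, must be horizontal trajectories of $\Psi$ (the Plemelj jump $\tfrac{1}{\pi i}\sqrt{U_1/U_2}\,dz$ is real only along arcs where $\sqrt{U_1/U_2}\,dz$ is purely imaginary); this is the content of the piecewise-harmonicity results of the type in \cite{BB}, and it is the analytic core of the direction, not a technicality. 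Second, your claim that ``every other critical point is finite (the zeros of $U_1$ and the simple poles at the zeros of $U_2$)'' silently assumes $U_2$ has simple zeros, whereas the theorem allows arbitrary monic $U_1,U_2$: a multiple zero of $U_2$ is a finite pole of $\Psi$ of order $\ge 2$. You must rule out poles of order $\ge 3$ (near such a point any branch cut would carry density $\sim r^{-3/2}$, contradicting finite total variation of $\mu$) and handle finite double poles (there $\C_\mu\sim c/(z-a)$ forces a real atom $c\,\delta_a$, whence the quadratic residue $-c^2<0$ and a circle domain); only then does Jenkins' structure theorem leave ring and circle domains. Relatedly, in part (2) such atoms at even-order zeros of $U_2$ are real contributions to $\mu$ missing from your jump-only description, and the single-valuedness of $\sqrt{U_1/U_2}$ on each component of $\bCP^1\setminus K_\Psi$ (an even number of odd-order branch points inside any closed trajectory, by the standard winding argument along a trajectory) is asserted rather than proved.
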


 Concerning  measures positive in $\bCP^1$, with the only non-simple
 pole at infinity, we notice first, that the Reeb graph is necessarily
 a tree, with infinite length edges corresponding to the edge domains
 (adjacent to $\infty\in\bCP^1$) and with the leaves which correspond
 to the components of $K_\Psi$ containing, necessarily, simple poles of
 $\Psi$. Given that, the following statement should be quite obvious.

\begin{theorem} [see  Theorem~5, \cite {STT}] \label{pr:positive} For any Strebel differential $\Psi=-{U_1(z)}dz^2/{U_2(z)}$ on $\bCP^1$  (in the notation of Theorem~\ref{th:charac}) there exists at most one positive measure 
satisfying \eqref{eq:imp} a.e. in $\bC$. Its support necessarily belongs to $K_\Psi$, and, therefore,  among  $2^{d-1}$ real measures  described in Theorem~\ref{th:charac} at most one is positive. 
\end{theorem}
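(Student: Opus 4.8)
The plan is to show that positivity forces, \emph{independently} on each of the $d-1$ bounded components of $\bCP^1\setminus K_\Psi$, a \emph{unique} choice of branch of $\sqrt{U_1/U_2}$. Since by Theorem~\ref{th:charac} the $2^{d-1}$ real measures are in bijection with the $2^{d-1}$ independent branch choices on these bounded components, this immediately yields that at most one of them is positive; its support lies in $K_\Psi$ by the same theorem. First I would record the structural input already noted above: $\Psi$ has a single non-simple (double) pole, at $\infty$, all remaining poles being simple and hence lying in $K_\Psi$. Consequently every bounded component $\Omega$ of $\bCP^1\setminus K_\Psi$ is a ring domain, foliated by closed trajectories of one and the same positive length, and the Reeb graph $\rg_\Psi$ is a tree. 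The tree property is exactly what makes the regions below well defined.

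The key step is a flux computation. Fix a bounded ring domain $\Omega$ (an edge of $\rg_\Psi$) and a closed trajectory $\gamma\subset\Omega$. Removing the corresponding edge disconnects the tree $\rg_\Psi$ into the part carrying the leaf at $\infty$ and a part that does not, so the Jordan curve $\gamma$ bounds a unique region $R\subset\bC$ not containing $\infty$. As $\mu$ is supported on $K_\Psi$ and carries no mass inside the open annulus $\Omega$, the enclosed mass $\mu(R)$ does not depend on the particular $\gamma\subset\Omega$. Writing $\C_\mu(z)=\int d\mu(\xi)/(z-\xi)$ and exchanging the order of integration (the argument principle), I get $\mu(R)=\frac{1}{2\pi i}\oint_\gamma \C_\mu\,dz$ with $\gamma$ positively oriented around $R$. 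Now I invoke the defining relation: on $\Omega$ the branch of $\C_\mu$ is single valued, so the $1$-form determined by $\Psi=-\C_\mu^2\,dz^2$, namely $\sqrt{\Psi}=i\,\C_\mu\,dz$, is single valued on $\Omega$ as well. Because $\gamma$ is a horizontal trajectory, $\oint_\gamma\sqrt{\Psi}$ is real and equals $\pm L$, where $L>0$ is the common length of the closed trajectories in $\Omega$. Substituting $\C_\mu\,dz=-i\sqrt{\Psi}$ gives $\mu(R)=\mp L/2\pi$.

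This identity finishes the argument. The magnitude $L/2\pi$ is intrinsic to $\Omega$, whereas the sign of $\mu(R)$ is governed \emph{solely} by the branch of $\sqrt{U_1/U_2}$ chosen on $\Omega$ (flipping the branch flips $\sqrt{\Psi}$, hence the sign of $\oint_\gamma\sqrt{\Psi}$). Positivity of $\mu$ forces $\mu(R)\ge 0$, and since $L>0$ exactly one of the two branches is admissible, namely the one with $\mu(R)=+L/2\pi$. Running this over all $d-1$ bounded domains pins down every branch, so at most one of the $2^{d-1}$ measures of Theorem~\ref{th:charac} can be positive. I would then note that this is the measure-theoretic shadow of the orientation bookkeeping in Lemma~\ref{lem:positive} and Lemma~\ref{lm:mass}: the forced branch corresponds to the edge $\Omega$ being oriented away from $\infty$, consistent with the incoming orientation forced at the simple-pole leaves.

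The part I expect to require the most care is the sign-and-single-valuedness bookkeeping inside the flux computation: one must verify that fixing the branch of $\C_\mu$ (equivalently of $\sqrt{U_1/U_2}$) on a ring domain renders $\sqrt{\Psi}$ single valued there \emph{despite} the annulus being non-simply-connected, and that the two branch choices genuinely yield opposite signs of $\oint_\gamma\sqrt{\Psi}$ rather than differing inconsequentially. Both facts follow from $\C_\mu$ being a globally single-valued analytic function off $K_\Psi$ together with the normalization $\C_\mu(z)=1/z+O(1/z^2)$ at $\infty$ (which also selects the forced branch on the unbounded circle domain), but I would state them explicitly before concluding.
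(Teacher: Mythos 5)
Your core flux computation is correct, and it is essentially the argument the paper leaves implicit: the paper's own treatment of this theorem consists of the remark preceding it (the Reeb graph is a tree, with the infinite edge at $\infty$ and leaves at the components of $K_\Psi$ containing simple poles), the words ``should be quite obvious'', and a citation of Theorem~5 of \cite{STT}. Your identity $\mu(R)=\frac{1}{2\pi i}\oint_\gamma \C_\mu\,dz=\mp L/2\pi$, with the sign flipped by the branch choice on the ring domain, is exactly the quantitative content behind that remark --- it is the measure-theoretic form of Lemma~\ref{lm:mass} together with the orientation constraints of Lemma~\ref{lem:positive} --- and it correctly forces the branch on each bounded component, hence at most one positive measure among the $2^{d-1}$ of Theorem~\ref{th:charac}(2). (A small over-attribution: in $\bC$ any closed trajectory bounds a unique bounded region automatically, so the tree property of $\rg_\Psi$ is not really what makes $R$ well defined.)

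There is, however, a genuine gap with respect to the statement as formulated. The theorem asserts uniqueness among \emph{all} positive measures satisfying \eqref{eq:imp} a.e.\ in $\bC$, and asserts as a \emph{conclusion} that any such measure is supported on $K_\Psi$. You dispose of this with ``its support lies in $K_\Psi$ by the same theorem'', but Theorem~\ref{th:charac}(2) says no such thing: it enumerates only those real measures whose support is \emph{assumed} to be contained in $K_\Psi$. That hypothesis cannot be dropped for free: real signed measures satisfying \eqref{eq:imp} off their support and charging non-critical closed trajectories do exist (flip the branch of $\sqrt{U_1/U_2}$ across any closed trajectory inside a ring domain; the jump density $\frac{1}{2\pi i}(\C_{-}-\C_{+})\,dz$ is real along a horizontal trajectory), so your argument as written does not exclude a positive measure charging $\bC\setminus K_\Psi$. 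Closing the gap needs the a priori structural fact --- the nontrivial input from \cite{STT} (see also \cite{BB}), where the potential $u_\mu$ is shown to be piecewise harmonic --- that for a measure whose Cauchy transform squares to a rational function a.e., the support is a finite union of trajectories and points. Granting that, your own flux identity finishes the job: two trajectories $\gamma_1,\gamma_2$ of one ring domain, each avoiding $\mathrm{supp}\,\mu$, both enclose mass exactly $+L/2\pi$ by positivity, so no mass can sit between them, forcing $\mathrm{supp}\,\mu\subseteq K_\Psi$. As written, your proposal establishes only the final clause of the theorem, with the first two clauses silently presupposed.
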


Moreover,   we can formulate an exact criterion of the existence of a positive measure in terms of rather simple topological properties of $K_\Psi$.   To do this we need one more definition. Observe that in our situation $K_\Psi$ is a planar multigraph.


\begin{defi+} By a {\em simple cycle} in a planar multigraph $K_\Psi,$ we mean any closed non-self-intersecting curve formed by  the edges of $K_\Psi$. (Obviously, any simple cycle bounds an open domain homeomorphic to a disk which we call the {\em interior of the cycle}.) 
\end{defi+}

\begin{center}
\begin{picture}(440,160)(0,0)

\put (85,120){\circle{25}}

\put(80,120){\circle*{3}}
\put(80,120){\line(1,0){10}}
\put(90,120){\circle*{3}}

\put (125,120){\circle{25}}
\put(120,120){\circle*{3}}
\put(120,120){\line(1,0){10}}
\put(130,120){\circle*{3}}

\put(97,120){\circle*{3}}
\put(97,120){\line(1,0){16}}
\put(113,120){\circle*{3}}

\qbezier(60,120)(75,195)(150,120)
\qbezier(60,120)(75,45)(150,120)
\put(150,120){\circle*{3}}
\put(150,120){\line(1,0){20}}
\put(170,120){\circle*{3}}

\put(230,120){\line(1,0){60}}
\put(230,120){\circle*{3}}
\put(290,120){\circle*{3}}

\qbezier(260,145)(310,175)(290,120)
\qbezier(260,145)(210,175)(230,120)
\qbezier(260,85)(310,90)(290,120)
\qbezier(260,85)(210,90)(230,120)
\put(260,145){\circle*{3}}
\put(260,145){\line(0,-1){12}}
\put(260,133){\circle*{3}}

\put(280,130){\circle*{3}}
\put(240,130){\circle*{3}}
\qbezier(240,130)(260,122)(280,130)

\put(240,105){\circle*{3}}
\put(280,105){\circle*{3}}
\qbezier(240,105)(260,95)(280,105)





\put(20,40){Figure 5. $K_\Psi$ admitting and not admitting a positive measure}
\end{picture}
\end{center}

\begin{Prop} [see  Proposition~2, \cite {STT}] \label{pr:critQ} A Strebel differential $\Psi=-{U_1(z)}dz^2/{U_2(z)}$ admits a positive measure satisfying \eqref{eq:imp} if and only if no edge of $K_\Psi$ is attached to a simple cycle from inside. In other words,  for any simple cycle in $K_\Psi$ and any edge not in the cycle but adjacent to some vertex in the cycle 
this edge does not belong to its interior. The support of the positive measure coincides with the forest obtained from $K_\Psi$ after the removal of all its simple cycles.  
 \end{Prop}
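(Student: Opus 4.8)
The plan is to reduce the positivity question, via the machinery of \S\ref{sec:pos}, to a purely combinatorial condition on the planar multigraph $K_\Psi$ and then to read that condition off topologically. Since $Y=\bCP^1$ is simply connected, the Reeb graph $\rg_\Psi$ is a tree, so by Proposition~\ref{prop:gradient} the differential is automatically gradient and, by Theorem~\ref{th:charac}, its potentials are in bijection with the $2^{d-1}$ sign choices of $\C_\mu=\sqrt{U_1/U_2}$, one for each bounded complementary domain. First I would record the planar dictionary between $K_\Psi$ and these domains: a \emph{bridge} (an edge lying on no simple cycle) of $K_\Psi$ has one and the same ring domain on both of its sides, whereas an edge lying on a simple cycle separates two distinct domains. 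Consequently Lemma~\ref{lem:positive} specializes as follows: a potential puts zero mass on a cycle edge exactly when its two adjacent domains are oppositely oriented, while the mass on a bridge equals $\pm 2|d\Im\sqrt{\Psi}|$ according to whether the surrounding domain is oriented toward or away from the component carrying the bridge.

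For sufficiency I would exhibit the candidate potential explicitly. Because any such $\mu$ has total mass $1$ and compact support, its logarithmic potential behaves like $\log|z|$ near $\infty$; hence a positive potential is forced to be maximal at $\infty$ and to decrease as one moves inward through the nested ring domains. I therefore root the tree $\rg_\Psi$ at the infinite critical point and orient every domain downhill, i.e. away from $\infty$. A routine local computation (the model being Example~\ref{Ex:1}(b), $h=|\Im z|$) then shows that an \emph{outward}-facing bridge is a min-ridge of the potential and carries positive mass, while every cycle edge receives opposite contributions from its two adjacent domains and so carries zero mass. If no edge of $K_\Psi$ is attached to a simple cycle from inside, every bridge faces outward; thus all forest edges carry positive mass and all cycle edges carry none. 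By Lemma~\ref{lm:grad} the resulting $\mu_F$ is supported on $K_\Psi\cup Cr_\Psi$, its restriction to $K_\Psi$ is nonnegative, and its support is precisely the forest obtained by deleting the simple cycles, giving both existence and the asserted support.

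For necessity I would argue that an edge $e$ attached to a simple cycle $C$ from inside is obstructive irrespective of the deeper nesting. Such an $e$ is a bridge whose surrounding ring domain $R$ lies in the interior of $C$; since $R$ is a genuine ring domain it must wrap around a component $K_{\mathrm{in}}$ sitting still deeper inside $C$ (an empty disc would force a second-order pole, impossible since $\infty$ is the only one). The key local fact is that on $R$ the potential is harmonic, hence monotone across the annulus, so the critical trajectory $e$, which sits on the \emph{outer} boundary of $R$ yet has $R$ on both of its sides, is a two-sided max-ridge: the potential decreases into $R$ on both sides of $e$, forcing negative mass there for \emph{either} sign of $\sqrt{U_1/U_2}$ on $R$. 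As the Reeb graph is a tree, the orientation is pinned by the pole at $\infty$ (and, at the leaves, by the simple poles, which by the discussion around Fig.~\ref{fig:pole} force incoming edges), leaving no freedom to repair the sign; hence no potential is positive, which proves the ``only if'' direction and, together with Theorem~\ref{pr:positive}, the uniqueness of the positive measure.

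The main obstacle is exactly this necessity step: one must rule out that the negative contribution at the inward edge is cancelled or circumvented by some admissible global branch choice. I expect the cleanest route to be the potential-theoretic max-ridge observation above, which is local and uses only monotonicity of a harmonic function on a ring domain together with subharmonicity of $u_\mu$, rather than a case analysis of the nested fat-graph structure. The remaining bookkeeping, namely that deleting all simple cycles leaves exactly a forest and that this forest is precisely where the positive mass sits, should then follow from Lemma~\ref{lm:mass} and the width accounting of \S\ref{sec:pos}.
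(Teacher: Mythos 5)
You should note first that the paper itself contains no proof of Proposition~\ref{pr:critQ}: it is imported verbatim from Proposition~2 of \cite{STT}, and the surrounding text only remarks that it is a special case of the machinery of \S\ref{sec:pos}. So your plan---root the Reeb tree at $\infty$, translate positivity through Lemma~\ref{lem:positive} into a bridge/cycle-edge dichotomy for the planar multigraph $K_\Psi$---is exactly the intended route, and your sufficiency half is essentially sound: orienting every edge of the (tree) Reeb graph away from $\infty$, a bridge lying on the outer boundary of its component is a min-ridge carrying mass $2|d\,\Im\sqrt{\Psi}|$, a cycle edge has the parent domain on one side and a child domain on the other and hence carries zero mass, and the hypothesis does force every bridge onto the outer boundary (any part of $K_\alpha$ hanging inside a cycle must attach to that cycle through an edge violating the stated condition). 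This yields existence and the support statement.

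The necessity half, however, has two genuine gaps. First, you assume the offending edge $e$ is a bridge with a single ring domain $R$ on both sides. It need not be: $e$ may itself lie on a smaller cycle, e.g.\ the middle edge of a theta-shaped component, which sits inside the outer cycle and is adjacent to both of its vertices (so the condition is violated), yet separates two \emph{distinct} ring domains; your two-sided max-ridge computation simply does not apply to such an edge. Second, even in the bridge case, the assertion that $e$ carries negative mass ``for either sign of $\sqrt{U_1/U_2}$ on $R$'' is false: for the opposite branch, $F$ increases off $e$ into $R$ on both sides, so $e$ is a min-ridge with \emph{positive} mass. Your fallback---that the orientation is ``pinned by the pole at $\infty$''---contradicts Theorem~\ref{th:charac}(2): there is one free sign per bounded complementary domain ($2^{d-1}$ choices in all), and only the domain adjacent to $\infty$ and the leaf components containing simple poles (Fig.~\ref{fig:pole}) are forced. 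What is missing is a descent argument: if $R$ (or, in the theta case, one of the two interior domains adjacent to $e$) is oriented toward $K_\alpha$ so as to save $e$, then every component inside it has its parent edge outgoing; since the Reeb tree is finite, one reaches an innermost component---a tree with a single adjacent domain, whose endpoints are simple poles---with all incident orientation outgoing, which by Lemma~\ref{lem:positive} (or the width count of Lemma~\ref{lm:mass}) carries negative mass. Running this induction over all admissible sign patterns is what actually closes the ``only if'' direction; the local harmonicity and subharmonicity of $u_\mu$ that you invoke cannot substitute for it.
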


\end{document}